\definecolor{indigo}{HTML}{492DA5}
\providecommand{\noopsort}[1]{} 
\g@addto@macro\bfseries{\boldmath}\makeatother
\let\origsection\section
\renewcommand\section{\@ifstar{\starsection}{\nostarsection}}
\newcommand\sectionspace{\vspace{0.5ex}}
\newcommand\nostarsection[1]{\sectionspace\origsection{#1}\sectionspace}
\newcommand\starsection[1]{\sectionspace\origsection*{#1}\sectionspace}
\setlist[enumerate]{font=\normalfont}
\crefname{enumi}{}{}
\numberwithin{equation}{section}
\crefname{equation}{Equation}{Equations}
\newtheorem{theorem}{Theorem}[section]
\crefname{thm}{Theorem}{Theorems}
\newtheorem{lemma}[theorem]{Lemma}
\crefname{lemma}{Lemma}{Lemmas}
\newtheorem{proposition}[theorem]{Proposition}
\crefname{proposition}{Proposition}{Propositions}
\newtheorem{corollary}[theorem]{Corollary}
\crefname{corollary}{Corollary}{Corollaries}
\theoremstyle{definition}
\newtheorem*{assumption*}{Assumption}
\crefname{assumption}{Assumption}{Assumptions}
\theoremstyle{remark}
\newtheorem{remark}[theorem]{Remark}
\crefname{remark}{Remark}{Remarks}
\newcommand{\E}{E}
\newcommand{\F}{\mathscr{F}}
\renewcommand{\L}{\mathscr{L}}
\newcommand{\T}{\mathscr{T}}
\newcommand{\U}{\mathscr{U}}
\newcommand{\iofname}{\epsilon}
\newcommand{\iof}[1]{\epsilon(#1)}
\newcommand{\iofinv}[1]{\epsilon^{-1}(#1)}
\newcommand{\idempotentfiltersof}[1]{{#1}^{(0)}}
\newcommand{\filterone}{F}
\newcommand{\filtertwo}{G}
\newcommand{\properfilterone}{F}
\newcommand{\properfiltertwo}{G}
\newcommand{\properfilterthree}{H}
\newcommand{\ultrafilterone}{U}
\newcommand{\eF}{{\normalfont\textsf{F}}}
\newcommand{\eT}{{\normalfont\textsf{T}}}
\newcommand{\eU}{{\normalfont\textsf{U}}}
\newcommand{\eG}{{\normalfont\textsf{G}}}
\newcommand{\eL}{{\normalfont\textsf{L}}}
\newcommand{\setoffilters}{\L}
\newcommand{\setofproperfilters}{\F}
\newcommand{\setofultrafilters}{\U}
\newcommand{\ISGoffilters}{\mathscr{L}}
\newcommand{\groupoidoffilters}{\setoffilters}
\newcommand{\groupoidofproperfilters}{\setofproperfilters}
\newcommand{\groupoidoftightfilters}{\setoftightfilters}
\newcommand{\groupoidofultrafilters}{\setofultrafilters}
\newcommand{\groupoidofgerms}{\groupoid}
\newcommand{\groupoidofpropergerms}{\groupoid_0}
\newcommand{\groupoidoftightgerms}{\groupoid_{\text{tight}}}
\newcommand{\groupoidofultragerms}{\groupoid_{\infty}}
\newcommand{\Patersonsgroupoid}{\groupoid_u}
\newcommand{\finsubseteq}{\subseteq_{\normalfont\text{fin}}}
\newcommand{\germequivalence}[3]{\sim_{#1}}
\newcommand{\germ}[2]{[#1, #2]}
\newcommand{\openone}{A}
\newcommand{\npo}{\le}
\newcommand{\nporeverse}{\ge}
\newcommand{\up}[1]{#1^\uparrow}
\newcommand{\upbrackets}[1]{(#1)^\uparrow}
\newcommand{\down}[1]{#1^\downarrow}
\newcommand{\indexbyin}[3]{(#1_{#2})_{#2 \in #3}}
\newcommand{\filtercompose}[2]{#1 \cdot #2}
\newcommand{\sourcesymbolname}{\normalfont\textbf{d}}
\newcommand{\sourcesymbol}[1]{\normalfont\textbf{d}(#1)}
\newcommand{\inversesourcesymbol}[1]{\normalfont\textbf{d}^{-1}(#1)}
\newcommand{\rangesymbolname}{\normalfont\textbf{r}}
\newcommand{\rangesymbol}[1]{\normalfont\textbf{r}(#1)}
\newcommand{\lff}{\lambda}
\newcommand{\pff}{\pi}
\newcommand{\Efilterone}{\xi}
\newcommand{\Efiltertwo}{\eta}
\newcommand{\Efilterofproperfilter}[1]{\Efilterone_{#1}}
\newcommand{\setoftightfilters}{\T}
\newcommand{\setofEfilters}{\eL(\E)}
\newcommand{\setofEproperfilters}{\eF(\E)}
\newcommand{\setoftightEfilters}{\eT(\E)}
\newcommand{\setofEultrafilters}{\eU(\E)}
\newcommand{\standardSEf}[1]{\beta_{#1}}
\newcommand{\gogrepset}[3]{\Omega_{#1}}
\newcommand{\zero}{0}
\newcommand{\groupoid}{\mathcal{G}}
\newcommand{\groupoidH}{\mathcal{H}}
\newcommand{\units}[1]{#1^{(0)}}
\newcommand{\comps}[1]{#1^{(2)}}
\newcommand{\ghomomorphism}{\phi}
\newcommand{\properfiltersIB}[1]{\F_{#1}}
\newcommand{\properfiltersPB}[2]{\F_{#1:#2}}
\newcommand{\properfiltersIBcollection}{(\properfiltersIB{s})_{s \in S}}
\newcommand{\properfiltersPBcollection}{(\properfiltersPB{s}{T})_{s \in S, T \finsubseteq \down{s}}}
\newcommand{\unitsproperfiltersPBcollection}{(\properfiltersPB{e}{X})_{e \in \E, X \finsubseteq \down{e}}}
\newcommand{\ultrafiltersIB}[1]{\U_{#1}}
\newcommand{\ultrafiltersIBcollection}{(\ultrafiltersIB{s})_{s \in S}}
\newcommand{\EfiltersIB}[1]{\eF_{#1}}
\newcommand{\EfiltersPB}[2]{\eF_{#1:#2}}
\newcommand{\EultrafiltersIB}[1]{\eU_{#1}}
\newcommand{\propergermsbasic}[2]{\Theta_0(#1,#2)}
\newcommand{\ultragermsbasic}[2]{\Theta_\infty(#1,#2)}
\newcommand{\inv}[1]{#1^{-1}}
\newcommand{\sinv}[1]{#1^*}
\newcommand{\finv}[1]{\sinv{#1}}
\newcommand{\ginv}[1]{#1^{-1}}
\title[Filtering germs: Groupoids associated to inverse semigroups]{Filtering germs: \\ Groupoids associated to inverse semigroups}
\author[Armstrong]{Becky Armstrong}
\author[Clark]{Lisa Orloff Clark}
\author[an Huef]{Astrid an Huef}
\author[Jones]{Malcolm Jones}
\author[Lin]{Ying-Fen Lin}
\address[B.\ Armstrong]{Mathematical Institute, WWU M\"unster, Einsteinstr.\ 62, 48149 M\"unster, GERMANY}
\email{\href{mailto:becky.armstrong@uni-muenster.de}{becky.armstrong@uni-muenster.de}}
\address[L.O.\ Clark, A.\ an Huef, and M.\ Jones]{School of Mathematics and Statistics, Victoria University of Wellington, PO Box 600, Wellington 6140, NEW ZEALAND}
\email{\href{mailto:lisa.clark@vuw.ac.nz}{lisa.clark}, \href{mailto:astrid.anhuef@vuw.ac.nz}{astrid.anhuef}, \href{mailto:malcolm.jones@vuw.ac.nz}{malcolm.jones@vuw.ac.nz}}
\address[Y.-F.\ Lin]{Mathematical Sciences Research Centre, Queen's University Belfast, Bel\-fast, BT7 1NN, UNITED KINGDOM}
\email{\href{mailto:y.lin@qub.ac.uk}{y.lin@qub.ac.uk}}
\thanks{This research is supported by the Marsden Fund grant 18-VUW-056 from the Royal Society of New Zealand.}
\keywords{Inverse semigroup, groupoid, germs, filters, patch topology}
\subjclass[2020]{06F05, 18B40, 20M18, 22A22}
\date{\today.}
\begin{document}

\begin{abstract}
We investigate various groupoids associated to an arbitrary inverse semigroup with zero. We show that the groupoid of filters with respect to the natural partial order is isomorphic to the groupoid of germs arising from the standard action of the inverse semigroup on the space of idempotent filters. We also investigate the restriction of this isomorphism to the groupoid of tight filters and to the groupoid of ultrafilters.
\end{abstract}

\maketitle

\section{Introduction}

An \emph{inverse semigroup} is a set $S$ endowed with an associative binary operation such that for each $a \in S$, there is a unique $\sinv{a} \in S$, called the \emph{inverse} of $a$, satisfying
\[
a\sinv{a}a = a \ \text{ and } \ \sinv{a}a\sinv{a} = \sinv{a}.
\]
The study of \'etale groupoids associated to inverse semigroups was initiated by Renault \cite[Remark~III.2.4]{Ren80}. We consider two well known groupoid constructions: the filter approach and the germ approach, and we show that the two approaches yield isomorphic groupoids.

Every inverse semigroup has a natural partial order, and a filter is a nonempty down-directed up-set with respect to this order. The filter approach to groupoid construction first appeared in \cite{Len08}, and was later simplified in \cite{LMS13}. Work in this area is ongoing; see for instance, \cite{Bic21, BC20, Cas20}.

Every inverse semigroup acts on the filters of its subsemigroup of idempotents. The groupoid of germs associated to an inverse semigroup encodes this action. Paterson pioneered the germ approach in \cite{Pat99} with the introduction of the universal groupoid of an inverse semigroup. Exel's treatise \cite{Exe08} gives a construction of a groupoid of germs akin to Paterson's, and introduces the tight groupoid. The germ approach is also used in the construction of Kumjian and Renault's Weyl groupoid \cite{Kum86, Ren08}.

The filter and germ approaches yield the same groupoids up to isomorphism. While this is mentioned in the literature (see \cite[Theorem~4.13]{Len08}, \cite[Corollary~5.7]{MR10}, \cite[Section~3.3]{LMS13}, \cite[Section~5.1]{LL13}, \cite[Remark~2.43]{Bic21}, and \cite[Corollary~6.8]{Cas20}), the details are omitted or are deeply embedded in proofs. In our main theorem (\cref{thm:pff}) we give an explicit isomorphism from the groupoid of proper filters to the groupoid of proper germs; we also give a formula for its inverse in terms of the idempotents of the inverse semigroup. The advantage of our approach is that we can then deduce the corresponding results for the groupoids of tight filters and ultrafilters via restriction. This gives a detailed and unified exposition which has already been a useful tool in \cite{ACCCLMRSS21}.

We consider a number of groupoids in this work; our notation for these is as follows:
\begin{itemize}
\item $\groupoidofproperfilters$ is the groupoid of proper filters,
\item $\groupoidoftightfilters$ is the groupoid of tight filters,
\item $\groupoidofultrafilters$ is the groupoid of ultrafilters,
\item $\groupoidofpropergerms$ is the groupoid of proper germs,
\item $\groupoidoftightgerms$ is Exel's tight groupoid, and
\item $\groupoidofultragerms$ is the groupoid of ultragerms.
\end{itemize}
These groupoids are related via the diagram
\begin{center} \phantomsection \label{diag:roadmap}
\begin{tikzcd}[column sep=tiny]
\groupoidofproperfilters \arrow[d, hook, two heads, "\pff"] & \ge & \groupoidoftightfilters \arrow[d, hook, two heads, "\pff|_{\groupoidoftightfilters}"] & \ge & \groupoidofultrafilters \arrow[d, hook, two heads, "\pff|_{\groupoidofultrafilters}"] \\
\groupoidofpropergerms & \ge & \groupoidoftightgerms & \ge & \groupoidofultragerms
\end{tikzcd}
\end{center}
where $\groupoid \ge \groupoidH$ means that $\groupoidH$ is a subgroupoid of $\groupoid$, and the maps are topological groupoid isomorphisms. Note that $\ge$ is an order relation.

The objective of \cref{sec:homeo_unit_spaces} is to show that the unit spaces of $\groupoidofproperfilters$ and $\groupoidofpropergerms$ are homeomorphic. In \cref{sec:preliminaries} we present some background on groupoids and inverse semigroups, and in \cref{sec:filters} we define filters of inverse semigroups. We then introduce the various patch topologies in \cref{sec:patch}, and finally in \cref{sec:unit_spaces} we describe our homeomorphism. In \cref{sec:groupoids} we define the groupoids themselves, and we note key interactions between the structure of filters and germs (see, for instance, \cref{lem:pff_wd_rp}). In \cref{sec:filters_to_germs} we show that $\groupoidofproperfilters$ and $\groupoidofpropergerms$ are isomorphic as topological groupoids (\cref{thm:pff}), and that our isomorphism restricts to an isomorphism from $\groupoidofultrafilters$ to $\groupoidofultragerms$ (\cref{cor:uff}).

\section{Idempotents, filters, and the patch topology} \label{sec:homeo_unit_spaces}

The main goal of this paper is to show that the groupoid of proper filters is isomorphic to the groupoid of proper germs, and that the groupoid of ultrafilters is isomorphic to the groupoid of ultragerms. In this section, we show that the unit spaces of these pairs of groupoids are homeomorphic with respect to the patch topology.

\subsection{Preliminaries} \label{sec:preliminaries}

A \emph{groupoid} is a set $\groupoid$ with a subset $\comps{\groupoid} \subseteq \groupoid \times \groupoid$, a \emph{composition} $(\alpha,\beta) \mapsto \alpha \beta$ from $\comps{\groupoid}$ to $\groupoid$ and an \emph{inversion} $\gamma \mapsto \inv{\gamma}$ on $\groupoid$ such that, for all $\alpha,\beta,\gamma \in \groupoid$,
\begin{enumerate}[label=(\roman*)]
\item $(\gamma^{-1})^{-1}=\gamma$ and $(\inv{\gamma},\gamma) \in \comps{\groupoid}$,
\item if $(\alpha, \beta),(\beta,\gamma) \in \comps{\groupoid}$, then $(\alpha, \beta \gamma),(\alpha \beta,\gamma) \in \comps{\groupoid}$ and $\alpha (\beta \gamma) = (\alpha \beta) \gamma$, and
\item if $(\gamma,\eta) \in \comps{\groupoid}$, then $\gamma^{-1}\gamma\eta = \eta$ and $\gamma\eta\eta^{-1} = \gamma$.
\end{enumerate}
The \emph{units} of a groupoid $\groupoid$ are the elements of the \emph{unit space} $\units{\groupoid} \coloneqq \{\inv{\gamma}\gamma : \gamma \in \groupoid\}$. The \emph{source} map $\sourcesymbolname\colon \groupoid \to \groupoid$ is defined by $\sourcesymbol{\gamma} \coloneqq \inv{\gamma}\gamma$, and the \emph{range} map $\rangesymbolname\colon \groupoid \to \groupoid$ is defined by $\rangesymbol{\gamma} \coloneqq \gamma\inv{\gamma}$. A subset $A$ of $\groupoid$ is called a \emph{local bisection} if $\sourcesymbolname|_A\colon A \to \groupoid$ is injective, or, equivalently, if $\rangesymbolname|_A\colon A \to \groupoid$ is injective. A subset $\groupoidH$ of $\groupoid$ is a \emph{subgroupoid} if, for all $\gamma \in \groupoidH$ and $(\alpha,\beta) \in (\groupoidH \times \groupoidH) \cap \comps{\groupoid}$, we have $\gamma^{-1}, \alpha\beta \in \groupoidH$. Given groupoids $\groupoid$ and $\groupoidH$, a bijection $\ghomomorphism\colon \groupoid \to \groupoidH$ is called a \emph{groupoid isomorphism} if for all $(\alpha,\beta) \in \comps{\groupoid}$, we have $(\ghomomorphism(\alpha),\ghomomorphism(\beta)) \in \comps{\groupoidH}$ and $\ghomomorphism(\alpha\beta) = \ghomomorphism(\alpha)\ghomomorphism(\beta)$. We call a groupoid $\groupoid$ \emph{topological} if $\groupoid$ is endowed with a topology, with respect to which composition and inversion are continuous (and consequently, so are $\sourcesymbolname$ and $\rangesymbolname$). A topological groupoid $\groupoid$ is \emph{\'etale} if its source map (or, equivalently, its range map) is a local homeomorphism. A basis $\mathscr{B}$ for the topology on a topological groupoid $\groupoid$ is called \emph{\'etale} if, for all $O,N \in \mathscr{B}$, we have $\ginv{O}, ON \in \mathscr{B}$ and $\ginv{O}O \subseteq \units{\groupoid}$. The topology on $\groupoid$ has an \'etale basis if and only if the groupoid $\groupoid$ is \'etale \cite[Proposition~6.6]{BS19}. If $\groupoid$ and $\groupoidH$ are topological groupoids and $\ghomomorphism\colon \groupoid \to \groupoidH$ is a homeomorphism and a groupoid isomorphism, then we call $\ghomomorphism$ a \emph{topological groupoid isomorphism}.

\begin{assumption*}
Throughout, let $S$ be an inverse semigroup containing an element $0$ satisfying $0a = a0 = 0$, for all $a \in S$. Note that even if $S$ doesn't contain such an element, a $0$ can always be adjoined.\footnote{Quoting \cite{Exe09}: \textquote{\textellipsis one may wonder why in the world would anyone want to insert a zero in an otherwise well behaved semigroup. Rather than shy away from inverse semigroups with zero, we will assume that all of them contain a zero element, not least because we want to keep a close eye on this exceptional element.}}
\end{assumption*}

The multiplication and inversion operations in $S$ satisfy $\sinv{(\sinv{a})} = a$ and $\sinv{(ab)} = \sinv{b}\sinv{a}$, for all $a,b \in S$. The \emph{natural partial order} on $S$ is the relation $a \npo b \iff a = a\sinv{a}b$, which satisfies
\begin{itemize}
\item $0 \npo a$,
\item $a \npo b \iff \sinv{a} \npo \sinv{b}$,
\item $a \npo b$ and $c \npo d \implies ac \npo bd$, and
\item $\sinv{(ab)}ab \npo \sinv{b}b$,
\end{itemize}
for all $a,b,c,d \in S$.

An element $e \in S$ is called an \emph{idempotent} if $ee = e$. The set $\E$ of idempotents in $S$ satisfies $\E = \{\sinv{s}s : s \in S\}$. For all $e, f \in \E$, the greatest lower bound of $\{e,f\}$ is $ef$. The set $\E$ is a commutative inverse semigroup. For all $e \in \E$ and $a \in S$, we have
\begin{itemize}
\item $0 \in \E$,
\item $\sinv{e} = e$,
\item $\sinv{a}ea \in \E$,
\item $ae,ea \npo a$, and
\item $a \npo e \implies a \in \E$.
\end{itemize}
We refer to \cite{Pet84, Law98} for extensive treatments of inverse semigroups.

\subsection{Filters, idempotent filters, and filters of idempotents} \label{sec:filters}

Given $A \subseteq S$, we write
\[
\up{A} \coloneqq \{b \in S : a \npo b \text{ for some } a \in A\} \quad \text{ and } \quad \down{A} \coloneqq \{b \in S : b \npo a \text{ for some } a \in A\}.
\]
For each $a \in S$, we write $\up{a} \coloneqq \up{\{a\}}$ and $\down{a} \coloneqq \down{\{a\}}$. For all $e \in \E$, we have $\down{e} \subseteq \E$. We say that $A$ is \emph{down-directed} if, for all $a,b \in A$, there is $c \in A$ such that $c \npo a,b$, and we say that $A$ is an \emph{up-set} if $\up{A} = A$. A \emph{filter} in $S$ is a nonempty down-directed up-set. A filter $\properfilterone$ is \emph{proper} if $0 \notin \properfilterone$. A filter $\ultrafilterone$ that is maximal among proper filters is called an \emph{ultrafilter}. Ultrafilters are prevalent in $S$ by a standard appeal to Zorn's lemma. The sets of filters, proper filters, and ultrafilters in $S$ are denoted by $\setoffilters$, $\setofproperfilters$, and $\setofultrafilters$, respectively. A filter $\filterone$ in $S$ is called \emph{idempotent} if $E \cap \filterone \ne \varnothing$. For any subset $\mathscr{H}$ of $\setoffilters$, we denote by $\idempotentfiltersof{\mathscr{H}}$ the set of filters in $\mathscr{H}$ that are idempotent.

Since $\E$ is an inverse semigroup containing $0$, it has its own set $\setofEproperfilters$ of proper filters and set $\setofEultrafilters$ of ultrafilters, which we call \emph{$\E$-filters} and \emph{$\E$-ultrafilters}, respectively.

\subsection{Patch topology} \label{sec:patch}

Given a set $Y$, we write $A \finsubseteq Y$ when $A$ is a finite subset of $Y$. For all $s \in S$ and all $T \finsubseteq \down{s}$, we define
\[
\properfiltersIB{s} \coloneqq \{\properfilterone \in \setofproperfilters : s \in \properfilterone\} \ \text{ and } \ \properfiltersPB{s}{T} \coloneqq \{\properfilterone \in \setofproperfilters : s \in \properfilterone \subseteq S \setminus T\}.
\]
Then $\properfiltersPBcollection$ is a basis for a topology \cite[Proposition~4.1]{Len08}, called the \emph{patch} topology on $\setofproperfilters$. The collection $(\units{\groupoidofproperfilters}\cap\properfiltersPB{s}{T})_{s \in S,T\finsubseteq\down{s}}$ is a basis for the patch topology on $\units{\setofproperfilters}$, but it will be helpful in computations to have a basis indexed only by idempotents.

\begin{lemma} \label{lem:basis_for_idempotent_proper_filters}
The collection $\unitsproperfiltersPBcollection$ is a basis for the patch topology on $\units{\setofproperfilters}$.
\end{lemma}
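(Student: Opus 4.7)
The plan is to show the collection in question is a basis by comparing it to the known basis $(\units{\setofproperfilters}\cap\properfiltersPB{s}{T})_{s \in S, T \finsubseteq \down{s}}$ stated just above the lemma. First I would observe the easy direction: for $e \in \E$ every set $\properfiltersPB{e}{X}$ consists of proper filters containing the idempotent $e$, hence of idempotent proper filters, so $\properfiltersPB{e}{X} \subseteq \units{\setofproperfilters}$; in particular each such set is open in $\units{\setofproperfilters}$.

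The substantive direction is to start with an arbitrary basic neighbourhood $\units{\setofproperfilters}\cap\properfiltersPB{s}{T}$ and a point $\xi$ in it, and produce $e \in \E$ and $X \finsubseteq \down{e}$ with $\xi \in \properfiltersPB{e}{X} \subseteq \properfiltersPB{s}{T}$. Since $\xi$ is an idempotent filter, it contains some $e_0 \in \E$; since $\xi$ is down-directed and contains both $e_0$ and $s$, there is $e \in \xi$ with $e \npo e_0$ and $e \npo s$. The first inequality forces $e \in \E$ (using $a \npo f \in \E \Rightarrow a \in \E$ from the preliminaries), and the second gives $e \in \down{s}$.

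The key step is then to replace the forbidden set $T$ with a finite set below $e$. I would take $X \coloneqq \{te : t \in T\}$. Each $te$ lies in $\E$ and satisfies $te \npo e$ (using $ae \npo a$ and the fact that $e \in \E$), so $X \finsubseteq \down{e}$. Membership $\xi \in \properfiltersPB{e}{X}$ is clear: $e \in \xi$, and if $te \in \xi$ then up-closedness together with $te \npo t$ would force $t \in \xi \cap T$, which is impossible.

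The main obstacle, and the heart of the argument, is verifying the inclusion $\properfiltersPB{e}{X} \subseteq \properfiltersPB{s}{T}$. Given $\eta$ in the left-hand side, $e \in \eta$ and $e \npo s$ give $s \in \eta$, so I just have to show $\eta \cap T = \varnothing$. If instead $t \in \eta \cap T$, then down-directedness yields $g \in \eta$ with $g \npo e$ and $g \npo t$; as before $g \in \E$. Applying the monotonicity property $a \npo b,\; c \npo d \Rightarrow ac \npo bd$ to $g \npo t$ and $g \npo e$ gives $g = gg \npo te$, so up-closedness forces $te \in \eta \cap X$, contradicting $\eta \in \properfiltersPB{e}{X}$. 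This contradiction completes the proof.
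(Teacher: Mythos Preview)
Your argument is correct and takes essentially the same route as the paper: pick an idempotent $e \le s$ in the filter and replace $T$ by a finite subset of $\down{e}$ (the paper uses $X = \{et^*t : t \in T\}$, which under the constraints $t \le s$ and $e = se$ actually agrees with your $\{te : t \in T\}$). One justification needs tightening, however: the claim that $te \in E$ and $te \le e$ does \emph{not} follow from ``$ae \le a$ and $e \in E$'' alone---that rule only gives $te \le t$, and for arbitrary $t \in S$ the product $te$ need be neither idempotent nor below $e$. You must invoke $t \le s$ (so $t = tt^*s$) together with $se = e$ to obtain $te = tt^*e$, a product of idempotents, whence $te \in E$ and $te = (tt^*)e \le e$. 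With that fix the rest goes through; your closing contradiction via down-directedness and $g = gg \le te$ is a clean alternative to the paper's appeal to closure of idempotent filters under multiplication and inversion.
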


\begin{proof}
Fix $s \in S$ and let $T \finsubseteq\down{s}$. Suppose that $\properfilterone \in \units{\groupoidofproperfilters}\cap\properfiltersPB{s}{T}$. It suffices to find $e \in \E$ and $X \finsubseteq\down{e}$ such that $\properfilterone \in \properfiltersPB{e}{X} \subseteq \units{\groupoidofproperfilters}\cap\properfiltersPB{s}{T}$. Since $\properfilterone \in \units{\setofproperfilters}$, there exists $e \in \E \cap \properfilterone$. Since $\down{e} \subseteq \E$, we may assume without loss of generality that $e \npo s$. Let $X \coloneqq \{e\sinv{t}t : t \in T\}$.

We show that $\properfilterone \in \properfiltersPB{e}{X}$. Since $e \in \properfilterone$, it remains to show that $\properfilterone \subseteq S \setminus X$. Suppose, looking for a contradiction, that there is $t \in T$ such that $e\sinv{t}t \in \properfilterone$. We know that $e = es$ because $e \npo s$. Also, $s\sinv{t}t = t$ because $t \in \down{s}$. Hence $e\sinv{t}t = (es)\sinv{t}t = et \npo t$, and so $t \in \properfilterone$. However, $\properfilterone \subseteq S \setminus T$, so we have a contradiction. Therefore, $\properfilterone \subseteq S \setminus X$, and so $\properfilterone \in \properfiltersPB{e}{X}$.

Now we show that $\properfiltersPB{e}{X} \subseteq \units{\groupoidofproperfilters}\cap\properfiltersPB{s}{T}$. Fix $\properfiltertwo \in \properfiltersPB{e}{X}$. Since $e \npo s$, we know $s \in \properfiltertwo$, and $\properfiltertwo \in \units{\setofproperfilters}$ because $e$ is an idempotent. It remains to show that $\properfiltertwo \subseteq S \setminus T$. Suppose, looking for a contradiction, that $t \in \properfiltertwo$ for some $t \in T$. Since $\properfiltertwo$ contains an idempotent, we know that $\properfiltertwo$ is closed under inversion and multiplication (see, for example, \cite[Lemma~2.9]{Law10}). Thus $e,t \in \properfiltertwo$ implies that $e\sinv{t}t \in \properfiltertwo$. However, $e\sinv{t}t \in X$ and $\properfiltertwo \subseteq S \setminus X$, which is a contradiction. Hence, $\properfiltertwo \subseteq S \setminus T$, as required. Therefore, $\properfilterone \in \properfiltersPB{e}{X} \subseteq \units{\groupoidofproperfilters}\cap\properfiltersPB{s}{T}$.
\end{proof}

Analogously, for all $e \in \E$ and all $X \finsubseteq \down{e}$, we define
\[
\EfiltersIB{e} \coloneqq \{\Efilterone \in \setofEproperfilters : e \in \Efilterone\} \ \text{ and } \ \EfiltersPB{e}{X} \coloneqq \{\Efilterone \in \setofEproperfilters : e \in \Efilterone \subseteq \E \setminus X\}.
\]
The collection $(\EfiltersPB{e}{X})_{e \in \E, X \finsubseteq \down{e}}$ is a basis for a topology on $\setofEproperfilters$ \cite{Exe08, Law12, EP16}, called the \emph{patch} topology on $\setofEproperfilters$. The closure of $\setofEultrafilters$ in $\setofEproperfilters$ with respect to the patch topology is denoted by $\setoftightEfilters$, and this coincides with the set $\setoftightEfilters$ mentioned in the proof of \cite[Proposition~2.25]{Law12}.

\subsection{Hausdorff unit space} \label{sec:unit_spaces}

In \cref{sec:groupoids} we define the groupoids $\groupoidofproperfilters$ and $\groupoidofpropergerms$ whose unit spaces $\units{\groupoidofproperfilters}$ and $\units{\groupoidofpropergerms}$ can both be identified with $\setofEproperfilters$. First we identify $\units{\groupoidofproperfilters}$ with $\setofEproperfilters$ using a generalisation of the map given in \cite[Lemma~2.18]{Law12}.

\begin{proposition} \label{prop:gopfunits_homeomorphic_to_setofefilters_patch}
There is a bijection $\iofname\colon \idempotentfiltersof{\setofproperfilters} \to \setofEproperfilters$ given by $\iof{\properfilterone} = \properfilterone \cap \E$, with inverse given by $\iofinv{\Efilterone} = \up{\Efilterone}$. Moreover, $\iofname$ is a homeomorphism of Hausdorff spaces with respect to the patch topology, and the restriction $\iofname|_{\idempotentfiltersof{\setofultrafilters}}$ is a homeomorphism onto $\setofEultrafilters$.
\end{proposition}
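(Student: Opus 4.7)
My plan is to verify the bijection, the homeomorphism, and the ultrafilter restriction in turn. For the bijection, I would first check that $\iof{\properfilterone} = \properfilterone \cap \E$ lies in $\setofEproperfilters$: it is nonempty by idempotency of $\properfilterone$, it is an up-set in $\E$ because $\properfilterone$ is an up-set in $S$, and down-directedness uses the bullet $a \npo e \Rightarrow a \in \E$ from \cref{sec:preliminaries} (any lower bound in $\properfilterone$ of two elements of $\properfilterone \cap \E$ automatically lies in $\E$). For the inverse, I would show $\iofinv{\Efilterone} = \up{\Efilterone}$ is an idempotent proper filter, with down-directedness coming from meets of idempotents (for $a, b \in \up{\Efilterone}$ with $e, f \in \Efilterone$ satisfying $e \npo a$ and $f \npo b$, the element $ef \in \Efilterone$ satisfies $ef \npo a, b$) and properness from antisymmetry of $\npo$. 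The identities $\up{\Efilterone} \cap \E = \Efilterone$ and $\up{(\properfilterone \cap \E)} = \properfilterone$ then follow quickly once down-directing inside $\properfilterone$ is available.

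For the topology, I would compare the basis $\unitsproperfiltersPBcollection$ of $\units{\groupoidofproperfilters}$ from \cref{lem:basis_for_idempotent_proper_filters} with the analogous basis $(\EfiltersPB{e}{X})_{e \in \E, X \finsubseteq \down{e}}$ of $\setofEproperfilters$. Since each $X$ is a finite subset of $\down{e} \subseteq \E$, the conditions $\properfilterone \subseteq S \setminus X$ and $\properfilterone \cap \E \subseteq \E \setminus X$ agree, so $\iofname$ sends $\units{\groupoidofproperfilters} \cap \properfiltersPB{e}{X}$ bijectively onto $\EfiltersPB{e}{X}$, giving the homeomorphism with no further work. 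For Hausdorffness, I would show each $\EfiltersIB{e}$ is clopen: if $e \notin \Efilterone$ and $f \in \Efilterone$, then $\EfiltersPB{f}{\{ef\}}$ is a neighbourhood of $\Efilterone$ disjoint from $\EfiltersIB{e}$ (any filter containing $e$ and $f$ contains $ef$). Two distinct $\E$-proper-filters differ in some idempotent $e$, and are separated by $\EfiltersIB{e}$ and its complement; Hausdorffness transfers to $\idempotentfiltersof{\setofproperfilters}$ via $\iofname$.

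The main obstacle is the ultrafilter restriction, where $\iofname$ must preserve maximality in both directions. If $\ultrafilterone \in \idempotentfiltersof{\setofultrafilters}$ and $\Efilterone \supsetneq \ultrafilterone \cap \E$ is a larger proper $\E$-filter, then $\up{\Efilterone}$ is a proper filter strictly containing $\ultrafilterone = \up{(\ultrafilterone \cap \E)}$ (strictness by injectivity of $\up{\cdot}$, established in the bijection step), contradicting maximality of $\ultrafilterone$. Conversely, if $\Efilterone \in \setofEultrafilters$ and $\properfilterone \supsetneq \up{\Efilterone}$ is a larger proper filter, then $\properfilterone$ is automatically idempotent since it contains $\Efilterone$, so $\properfilterone \cap \E$ is a proper $\E$-filter. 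To see $\Efilterone \subsetneq \properfilterone \cap \E$ strictly, pick $a \in \properfilterone \setminus \up{\Efilterone}$ and any idempotent $e \in \properfilterone$; down-directing yields $b \in \properfilterone$ with $b \npo a, e$, so $b \in \properfilterone \cap \E$, and $b \in \Efilterone$ would force $a \in \up{\Efilterone}$, a contradiction. This gives the restricted bijection, and since restrictions of homeomorphisms remain homeomorphisms onto their images, the proof concludes.
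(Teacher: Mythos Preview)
Your proposal is correct and follows essentially the same route as the paper's proof: both establish the bijection by verifying that $\properfilterone \mapsto \properfilterone \cap \E$ and $\Efilterone \mapsto \up{\Efilterone}$ are mutually inverse, deduce the homeomorphism by matching the basis of \cref{lem:basis_for_idempotent_proper_filters} with the sets $\EfiltersPB{e}{X}$, and handle the ultrafilter restriction by transferring maximality through the bijection. The only notable difference is that you prove Hausdorffness of $\setofEproperfilters$ directly via a clopen-basis argument, whereas the paper simply cites \cite[Lemma~2.22]{Law12}.
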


\begin{proof}
To see that $\iofname$ is well defined, note that for each $\properfilterone \in \units{\groupoidofproperfilters}$, we have $\properfilterone \cap \E \in \setofEproperfilters$, because $\down{e} \subseteq \E$ for each $e \in \E$. Note also that for each $\Efilterone \in \setofEproperfilters$, we have $\up{\Efilterone} \in \units{\groupoidofproperfilters}$. To see that $\iofname$ is bijective with inverse $\Efilterone \mapsto \up{\Efilterone}$, it suffices to show that $\properfilterone = \up{\iof{\properfilterone}}$ and $\Efilterone = \iof{\up{\Efilterone}}$, for all $\properfilterone \in \units{\groupoidofproperfilters}$ and $\Efilterone \in \setofEproperfilters$. Fix $\properfilterone \in \units{\groupoidofproperfilters}$. Toward $\properfilterone \subseteq \up{\iof{\properfilterone}}$, we take $x \in \properfilterone$. Since $\properfilterone \in \units{\groupoidofproperfilters}$, there is some $e \in \iof{\properfilterone}$. Since $\properfilterone$ is down-directed, there exists $f \in \properfilterone$ such that $f \npo e,x$. Now $f \in \down{\E} = \E$, so $f \in \iof{\properfilterone}$, and thus $x \in \up{\iof{\properfilterone}}$. For the reverse containment, let $x \in \up{\iof{\properfilterone}}$, and choose $e \in \properfilterone \cap \E$ such that $e \npo x$. It follows that $x \in \properfilterone$, and thus $\properfilterone = \up{\iof{\properfilterone}}$. Now fix $\Efilterone \in \setofEproperfilters$. Since $\Efilterone \in \setofEproperfilters$ is an up-set in $\E$, we have $\iof{\up{\Efilterone}} = \up{\Efilterone} \cap \E = \Efilterone \cap \E = \Efilterone$, as required.

To see that $\iofname$ is a homeomorphism, observe that $\iof{\properfiltersPB{e}{X}} = \EfiltersPB{e}{X}$, for all $e \in \E$ and all $X \finsubseteq\down{e}$, so \cref{lem:basis_for_idempotent_proper_filters} implies the result. Note that $\setofEproperfilters$ is Hausdorff with respect to the patch topology by \cite[Lemma~2.22]{Law12} (which applies because $\E$ is a meet-semilattice). Since $\epsilon$ is a homeomorphism, $\idempotentfiltersof{\setofproperfilters}$ is Hausdorff as well.

To see that $\iofname|_{\idempotentfiltersof{\setofultrafilters}}$ is a bijection from $\idempotentfiltersof{\setofultrafilters}$ to $\setofEultrafilters$, it suffices to show that $\iof{\units{\setofultrafilters}} = \setofEultrafilters$, because $\iofname\colon \idempotentfiltersof{\setofproperfilters} \to \setofEproperfilters$ is a bijection. Fix $\ultrafilterone \in \units{\setofultrafilters}$. Suppose that $\iof{\ultrafilterone} \subseteq \Efilterone \in \setofEproperfilters$. Then $\ultrafilterone \subseteq \iofinv{\Efilterone}$, but $\ultrafilterone$ is an ultrafilter, so $\ultrafilterone = \iofinv{\Efilterone}$. Hence $\iof{\ultrafilterone} = \Efilterone$, and so $\iof{\ultrafilterone} \in \setofEultrafilters$. A similar argument yields the reverse inclusion, so $\iof{\units{\setofultrafilters}} = \setofEultrafilters$.
\end{proof}

\section{Groupoids associated to inverse semigroups} \label{sec:groupoids}

Given a map $x \mapsto \sinv{x}$ on a set $X$, we define $\finv{A} \coloneqq \{\sinv{a} : a \in A\}$, for all $A \subseteq X$. Given $\comps{X} \subseteq X \times X$ and a map $(x,y) \mapsto xy$ from $\comps{X}$ to $X$, we define $AB \coloneqq \{ab : (a,b) \in (A \times B) \cap \comps{X}\}$, $xB \coloneqq \{x\}B$, and $Ay \coloneqq A\{y\}$, for all $A, B \subseteq X$ and $x, y \in X$.

\subsection{Groupoids of filters}

For all $\filterone,\filtertwo \in \setoffilters$, we define $\filtercompose{\filterone}{\filtertwo} \coloneqq \upbrackets{\filterone\filtertwo} \subseteq S$. Then $\ISGoffilters$ is an inverse semigroup with multiplication given by $(\filterone,\filtertwo) \mapsto \filtercompose{\filterone}{\filtertwo}$ and inversion given by $\filterone \mapsto \finv{\filterone}$. By \cite[Theorem~3.8]{LMS13}, $\ISGoffilters$ is isomorphic to Lenz's semigroup $\mathscr{O}(S)$ defined in \cite[Theorem~3.1]{Len08}. By restricting multiplication in $\ISGoffilters$ to $\comps{\groupoidoffilters} \coloneqq \{(\filterone,\filtertwo) \in \groupoidoffilters \times \groupoidoffilters : \filtercompose{\finv{\filterone}}{\filterone} = \filtercompose{\filtertwo}{\finv{\filtertwo}}\}$, we obtain a groupoid (see \cite[Proposition~4 of Section~3.1]{Law98}). Under these operations, the set $\groupoidofproperfilters$ forms a subgroupoid of $\groupoidoffilters$, which we call the \emph{groupoid of proper filters}. (This is the groupoid denoted by $\eL(S)$ in \cite{LL13}.) It is a consequence of \cite[Proposition~1 of Section~9.2]{Law98} that $\groupoidofultrafilters$ is a subgroupoid of $\groupoidofproperfilters$, which we call the \emph{groupoid of ultrafilters}. The groupoid of ultrafilters is the Weyl groupoid from \cite[Definition~2.42]{Bic21}, which is denoted by $\eG(S)$ in \cite{Law12, LL13}. The unit space of $\groupoidofproperfilters$ is precisely the set $\units{\groupoidofproperfilters}$ of idempotent proper filters, and the unit space of $\groupoidofultrafilters$ is $\units{\groupoidofultrafilters}$.

The following lemma is used to prove \cref{lem:pff_onto_lemma}. A more general version can be found in \cite[Proposition~1.4(ii)]{Law93} (see also \cite[Lemmas~2.8~and~2.11]{Law10}).

\begin{lemma} \label{lem:calc}
Fix $\properfilterone, \properfiltertwo \in \groupoidofproperfilters$.
\begin{enumerate}[label=(\alph*)]
\item \label{item1:lemcalc} For all $s \in \properfilterone$, we have $\properfilterone = \upbrackets{s\sourcesymbol{\properfilterone}}$.
\item \label{item2:lemcalc} If $\properfilterone \cap \properfiltertwo \ne \varnothing$ and $\sourcesymbol\properfilterone = \sourcesymbol\properfiltertwo$, then $\properfilterone=\properfiltertwo$.
\end{enumerate}
\end{lemma}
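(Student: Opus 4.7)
The plan is to prove~\cref{item1:lemcalc} by a direct two-sided inclusion argument, using only the properties of the natural partial order listed in \cref{sec:preliminaries}, and then to deduce~\cref{item2:lemcalc} as an immediate corollary.

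For the forward inclusion $\properfilterone \subseteq \upbrackets{s\sourcesymbol{\properfilterone}}$ in~\cref{item1:lemcalc}, I would fix $t \in \properfilterone$ and use down-directedness of $\properfilterone$ to produce $r \in \properfilterone$ with $r \npo s$ and $r \npo t$. The key computation is to show both that $\sinv{s}r$ lies in $\sourcesymbol{\properfilterone} = \upbrackets{\finv{\properfilterone}\properfilterone}$ and that $s\sinv{s}r = r$; together these give $r \in s\sourcesymbol{\properfilterone}$, and $r \npo t$ then places $t$ in $\upbrackets{s\sourcesymbol{\properfilterone}}$. Both facts come from the two equivalent forms $r = r\sinv{r}s$ and $r = s\sinv{r}r$ of the relation $r \npo s$ (the second obtained by inverting $\sinv{r} \npo \sinv{s}$), which yield $\sinv{s}r = (\sinv{s}s)(\sinv{r}r) \in \E$ and $s\sinv{s}s\sinv{r}r = s\sinv{r}r = r$.

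For the reverse inclusion, I would take $y \in \upbrackets{s\sourcesymbol{\properfilterone}}$ and unfold the definitions to obtain $a, b \in \properfilterone$ with $s\sinv{a}b \npo y$. Down-directedness then supplies $c \in \properfilterone$ with $c \npo s, a, b$, and combining this with $\sinv{c} \npo \sinv{a}$ via two applications of monotonicity of $\npo$ under multiplication yields $c = c\sinv{c}c \npo s\sinv{a}b \npo y$; since $\properfilterone$ is an up-set, $y \in \properfilterone$.

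Part~\cref{item2:lemcalc} is then a one-line consequence: picking $s \in \properfilterone \cap \properfiltertwo$, I apply~\cref{item1:lemcalc} to each filter to get $\properfilterone = \upbrackets{s\sourcesymbol{\properfilterone}} = \upbrackets{s\sourcesymbol{\properfiltertwo}} = \properfiltertwo$. The only real obstacle is the simultaneous use of the two characterisations $r = r\sinv{r}s$ and $r = s\sinv{r}r$ of $r \npo s$ to collapse $s\sinv{s}r$ to $r$; beyond this, the argument is a routine manipulation of the properties of $\npo$.
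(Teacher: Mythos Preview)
Your proof is correct and follows essentially the same approach as the paper. The paper's version is marginally shorter: for the forward inclusion it writes $y = s\sinv{y}y \in s\sourcesymbol{\properfilterone}$ directly (your element $\sinv{s}r$ in fact simplifies to $\sinv{r}r$), and for the reverse inclusion it simply observes $s\sourcesymbol{\properfilterone} \subseteq \filtercompose{\properfilterone}{\sourcesymbol{\properfilterone}} = \properfilterone$ via the groupoid identity rather than unfolding the definition of $\sourcesymbol{\properfilterone}$ by hand.
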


\begin{proof}
For part~\cref{item1:lemcalc}, fix $s, x \in \properfilterone$. Then there exists $y \in \properfilterone$ such that $y \npo x, s$. Thus $s\sinv{y}y = y \npo x$, and so $x \in \upbrackets{s\sourcesymbol{\properfilterone}}$. Since $s\sourcesymbol{\properfilterone} \subseteq \properfilterone$ and $\properfilterone$ is an up-set, the reverse containment is clear.

For part~\cref{item2:lemcalc}, suppose that $s \in \properfilterone \cap \properfiltertwo$ and $\sourcesymbol\properfilterone = \sourcesymbol\properfiltertwo$. Then part~\cref{item1:lemcalc} gives
\[
\properfilterone = \upbrackets{s\sourcesymbol{\properfilterone}} = \upbrackets{s\sourcesymbol{\properfiltertwo}} = \properfiltertwo. \qedhere
\]
\end{proof}

The groupoid $\groupoidofproperfilters$ of proper filters has Hausdorff unit space $\units{\groupoidofproperfilters}$ with respect to the patch topology, by \cref{prop:gopfunits_homeomorphic_to_setofefilters_patch}. We will show that $\groupoidofproperfilters$ is an \'etale groupoid with respect to the patch topology, but to see this, we will first consider a coarser topology on $\groupoidofproperfilters$ generated by the subcollection $\properfiltersIBcollection$, which is a basis by \cite{Bic21, Cas20}. Interestingly, in the subspace $\setofultrafilters$ of ultrafilters, the topology generated by the basis $(\properfiltersIB{s} \cap \setofultrafilters)_{s \in S}$ is the entire patch topology (see \cref{prop:ultrafilters_inclusive}).

\begin{lemma} \label{lem:pff_onto_lemma}
For all $s, t \in S$, we have 
\begin{enumerate}[label=(\alph*)]
\item \label{closed_under_inverse} $\sinv{\properfiltersIB{s}} = \properfiltersIB{\sinv{s}}$,
\item \label{fsft_fst} $\properfiltersIB{s}\properfiltersIB{t} = \properfiltersIB{st}$,
\item \label{local_bisection} $\properfiltersIB{s}$ is a local bisection in $\groupoidofproperfilters$,
\item \label{dfs_fsinvss} $\sourcesymbol{\properfiltersIB{s}} = \properfiltersIB{\sinv{s}s} ~\subseteq \units{\groupoidofproperfilters}$, and
\item \label{etale_basis} $\properfiltersIBcollection$ is an \'etale basis for the topology on the groupoid $\groupoidofproperfilters$.
\end{enumerate}
\end{lemma}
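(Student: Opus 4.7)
The plan is to tackle (a), (c), (d) via direct manipulations with the natural partial order and \cref{lem:calc}, treat (b) as the substantive step, and then read off (e). Part (a) is immediate: inversion $\properfilterone \mapsto \finv{\properfilterone}$ is an involution on $\setofproperfilters$ and the natural partial order is preserved under $*$, so $s \in \properfilterone$ if and only if $\sinv{s} \in \finv{\properfilterone}$. For (c), if $\properfilterone,\properfiltertwo \in \properfiltersIB{s}$ satisfy $\sourcesymbol{\properfilterone} = \sourcesymbol{\properfiltertwo}$, then $s \in \properfilterone \cap \properfiltertwo$, so \cref{lem:calc}\cref{item2:lemcalc} forces $\properfilterone = \properfiltertwo$.

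For (d), one containment is immediate: if $\properfilterone \in \properfiltersIB{s}$ then $\sinv{s}s \in \finv{\properfilterone}\properfilterone \subseteq \sourcesymbol{\properfilterone}$, and $\properfiltersIB{\sinv{s}s} \subseteq \units{\groupoidofproperfilters}$ because $\sinv{s}s \in \E$. For the reverse, given $\properfilterthree \in \properfiltersIB{\sinv{s}s}$ I would set $\properfilterone \coloneqq \upbrackets{s\properfilterthree}$: this is a filter containing $s = s \cdot \sinv{s}s$, and the delicate point is properness. If $0 \in \properfilterone$ then $se = 0$ for some $e \in \properfilterthree$; refining $e$ to an idempotent $f \in \properfilterthree$ with $f \npo \sinv{s}s$ (available since $\properfilterthree \cap \E$ is an $\E$-filter by \cref{prop:gopfunits_homeomorphic_to_setofefilters_patch}) gives $(sf)^*(sf) = f$, so $f = 0$, contradicting properness of $\properfilterthree$. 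Finally, \cref{prop:gopfunits_homeomorphic_to_setofefilters_patch} reduces $\sourcesymbol{\properfilterone} = \properfilterthree$ to the direct check $\sourcesymbol{\properfilterone} \cap \E = \properfilterthree \cap \E$.

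The main obstacle is the nontrivial inclusion in (b). The containment $\properfiltersIB{s}\properfiltersIB{t} \subseteq \properfiltersIB{st}$ is immediate from the definition of filter composition. For the reverse, given $\properfilterthree \in \properfiltersIB{st}$, I would set
\[
\properfiltertwo \coloneqq \upbrackets{t\,\sourcesymbol{\properfilterthree}}.
\]
Then $\sinv{t}t \in \sourcesymbol{\properfilterthree}$ (since $(st)^*(st) \npo \sinv{t}t$), so $t = t\sinv{t}t \in t\,\sourcesymbol{\properfilterthree}$, whence $\properfiltertwo \in \properfiltersIB{t}$; properness follows from the same $(tf)^*(tf) = f$ argument as in (d). The crucial step is proving the identity $\sourcesymbol{\properfiltertwo} = \sourcesymbol{\properfilterthree}$: elements of $\finv{\properfiltertwo}\properfiltertwo$ dominate products of the form $e\sinv{t}t f$ with $e,f \in \sourcesymbol{\properfilterthree}$, which already lie in $\sourcesymbol{\properfilterthree}$ by closure of idempotent filters under products; conversely, down-directedness refines any $e \in \sourcesymbol{\properfilterthree}$ to an idempotent $f \in \sourcesymbol{\properfilterthree}$ with $f \npo \sinv{t}t$, and then $(tf)^*(tf) = f$ exhibits $f$ inside $\finv{\properfiltertwo}\properfiltertwo$. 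Given this, $(\properfilterthree, \finv{\properfiltertwo})$ is composable, so setting $\properfilterone \coloneqq \filtercompose{\properfilterthree}{\finv{\properfiltertwo}}$ gives a proper filter with $\filtercompose{\properfilterone}{\properfiltertwo} = \properfilterthree$ by the groupoid identity $\filtercompose{\properfilterthree}{\sourcesymbol{\properfilterthree}} = \properfilterthree$, while $s \in \properfilterone$ follows from $(st)\sinv{t} = s(t\sinv{t}) \npo s$ (since $t\sinv{t} \in \E$). Finally, for (e), the covering and intersection conditions for $\properfiltersIBcollection$ are immediate (intersection uses down-directedness: if $\properfilterone \in \properfiltersIB{s} \cap \properfiltersIB{t}$, any $u \in \properfilterone$ with $u \npo s,t$ yields $\properfilterone \in \properfiltersIB{u} \subseteq \properfiltersIB{s} \cap \properfiltersIB{t}$), and the \'etale conditions $\finv{O}, ON \in \properfiltersIBcollection$ and $\finv{O}O \subseteq \units{\groupoidofproperfilters}$ follow directly from (a), (b), and (d) respectively.
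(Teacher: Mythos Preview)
Your argument is correct, and for parts (a), (c), and (e) it matches the paper. The substantive difference is in how you handle (b) and (d).

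For (b), the paper defines the same $\properfiltertwo = \upbrackets{t\,\sourcesymbol{\properfilterthree}}$ but then writes down $\properfilterone \coloneqq \up{\big(s\,\up{(t\,\sourcesymbol{\properfilterthree}\,\sinv{t})}\big)}$ explicitly and verifies $\sourcesymbol{\properfilterone} = \rangesymbol{\properfiltertwo}$ and $\filtercompose{\properfilterone}{\properfiltertwo} = \properfilterthree$ by a sequence of element-level inequalities. Your route is structurally cleaner: you first prove $\sourcesymbol{\properfiltertwo} = \sourcesymbol{\properfilterthree}$ and then let the groupoid do the work, setting $\properfilterone \coloneqq \filtercompose{\properfilterthree}{\finv{\properfiltertwo}}$ so that composability and $\filtercompose{\properfilterone}{\properfiltertwo} = \properfilterthree$ are automatic; properness of $\properfilterone$ also comes for free since $\groupoidofproperfilters$ is a subgroupoid. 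The only genuine computation left is $s \in \properfilterone$, which is the one-line observation $(st)\sinv{t} \npo s$. The two $\properfilterone$'s coincide (by part (c), there is a unique element of $\properfiltersIB{s}$ with the correct source), but your path avoids roughly a page of manipulations.

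For (d), the paper deduces $\sourcesymbol{\properfiltersIB{s}} \subseteq \properfiltersIB{\sinv{s}s}$ from (a) and (b), and for the reverse inclusion shows $\properfiltersIB{\sinv{s}}\properfiltersIB{s} \subseteq \sourcesymbol{\properfiltersIB{s}}$ via the bisection property. You instead construct a preimage directly: given $\properfilterthree \in \properfiltersIB{\sinv{s}s}$, take $\properfilterone = \upbrackets{s\properfilterthree}$ and check $\sourcesymbol{\properfilterone} = \properfilterthree$. This makes (d) logically independent of (b), at the cost of carrying out the idempotent-filter check $\sourcesymbol{\properfilterone}\cap\E = \properfilterthree\cap\E$ (which you leave implicit but which goes through exactly as your $(sf)^*(sf)=f$ argument suggests).

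Both approaches are valid; yours trades explicit filter constructions for systematic use of the groupoid axioms and the closure of idempotent filters under products and inversion.
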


\begin{proof}
Fix $s,t \in S$. Part~\cref{closed_under_inverse} follows from the definitions and that $\sinv{(\sinv{s})} = s$. Our argument for part~\cref{fsft_fst} is inspired by the proof of \cite[Lemma~2.10(4)]{Law12}. It is clear that $\properfiltersIB{s}\properfiltersIB{t} \subseteq \properfiltersIB{st}$. Fix $\properfilterthree \in \properfiltersIB{st}$. Put $\properfilterone \coloneqq \up{(s\up{(t \sourcesymbol{\properfilterthree} \sinv{t})})}$ and $\properfiltertwo \coloneqq \up{(t\sourcesymbol{\properfilterthree})}$. The subsets $\properfilterone$ and $\properfiltertwo$ are filters because they are upward closures of down-directed sets.

Since $\sinv{s}s \in E$, we have
\[
\sinv{t}t \nporeverse \sinv{t}(\sinv{s}s)t =~ \sinv{(st)}st \in \sourcesymbol{\properfilterthree},
\]
and so $\sinv{t}t \in \sourcesymbol{\properfilterthree}$. Thus, $t = t\sinv{t}t \in \properfiltertwo$, so $\properfiltertwo \in \properfiltersIB{t}$. Since $t\sinv{t} \in \E$, we have
\[
\sinv{s}s \nporeverse \sinv{s}st\sinv{t} \nporeverse t\sinv{t}\sinv{s}st\sinv{t} = t \sinv{(st)}st \sinv{t} \in t \sourcesymbol{\properfilterthree} \sinv{t},
\]
and so $\sinv{s}s \in \up{(t \sourcesymbol{\properfilterthree} \sinv{t})}$. Thus, $s = s\sinv{s}s \in \properfilterone$, so $\properfilterone \in \properfiltersIB{s}$.

We now show that $\sourcesymbol{\properfilterone} \subseteq \rangesymbol{\properfiltertwo}$. Fix $x \in \properfilterone$. We must show that $\sinv{x}x \in \rangesymbol{\properfiltertwo}$. Choose $h \in \properfilterthree$ such that $st\sinv{h}h\sinv{t} \npo x$. Since $h, st \in \properfilterthree$, we may assume without loss of generality that $h \npo st$, and so $x \nporeverse st\sinv{h}h\sinv{t} ~\nporeverse~ h\sinv{t}$. Thus,
\[
\sinv{x}x \nporeverse \sinv{(h\sinv{t})}h\sinv{t} = t\sinv{h}h\sinv{t} = t\sinv{h}h\sinv{h}h\sinv{t} = t\sinv{h}h\sinv{(t\sinv{h}h)} \in \rangesymbol{\properfiltertwo},
\]
and so $\sinv{x}x \in \rangesymbol{\properfiltertwo}$. Now we show that $\rangesymbol{\properfiltertwo} \subseteq \sourcesymbol{\properfilterone}$. Fix $y \in \properfiltertwo$. We must show that $y\sinv{y} \in \sourcesymbol{\properfilterone}$. Choose $h \in \properfilterthree$ such that $y \nporeverse t\sinv{h}h$. Since $st\sinv{h}h\sinv{t} \in \properfilterone$ and $t\sinv{h}ht, \sinv{s}s \in \E$, we have
\[
y\sinv{y} \nporeverse t\sinv{h}h\sinv{(t\sinv{h}h)} = t\sinv{h}h\sinv{t} \nporeverse t\sinv{h}h\sinv{t}\sinv{s}st\sinv{h}h\sinv{t} = \sinv{(st\sinv{h}h\sinv{t})}st\sinv{h}h\sinv{t} \in \sourcesymbol{\properfilterone},
\]
and so $y\sinv{y} \in \sourcesymbol{\properfilterone}$. Therefore, $\sourcesymbol{\properfilterone} = \rangesymbol{\properfiltertwo}$.

To see that $\filtercompose{\properfilterone}{\properfiltertwo} = \properfilterthree$, fix $x \in \filtercompose{\properfilterone}{\properfiltertwo}$. Then there exist $h_1,h_2 \in \properfilterthree$ such that
\[
x \nporeverse (st\sinv{h_1}h_1\sinv{t})(t\sinv{h_2}h_2).
\]
Since $h_1,h_2,st \in \properfilterthree$, there exists $h \in \properfilterthree$ such that $h \npo h_1, h_2, st$. Now, since $\sinv{h_1}h_1, \sinv{t}t \in \E$, we have
\[
x \nporeverse st\sinv{h_1}h_1\sinv{t}t\sinv{h_2}h_2 = st\sinv{t}t\sinv{h_1}h_1\sinv{h_2}h_2 = st\sinv{h_1}h_1\sinv{h_2}h_2 \nporeverse st\sinv{h}h \nporeverse h\sinv{h}h = h \in \properfilterthree,
\]
and hence $x \in \properfilterthree$. For the reverse inclusion, fix $h \in \properfilterthree$. We may assume that $h \npo st$, so a similar computation yields $h = (st\sinv{h}h\sinv{t})(t\sinv{h}h) \in \filtercompose{\properfilterone}{\properfiltertwo}$. Hence, $\filtercompose{\properfilterone}{\properfiltertwo} = \properfilterthree$.

It remains to show that $\properfilterone$ and $\properfiltertwo$ are proper filters, so that $\properfilterthree = \filtercompose{\properfilterone}{\properfiltertwo} \in \properfiltersIB{s}\properfiltersIB{t}$. If either $\properfilterone$ or $\properfiltertwo$ contains $\zero$, then so must $\properfilterthree$, because $\properfilterthree = \filtercompose{\properfilterone}{\properfiltertwo}$. But $\properfilterthree$ is proper, so neither $\properfilterone$ nor $\properfiltertwo$ contains $\zero$, completing the proof.

Part~\cref{local_bisection} follows from \cref{lem:calc}\cref{item2:lemcalc}. For part~\cref{dfs_fsinvss}, note that $\sinv{s}s \in \E$, so $\properfiltersIB{\sinv{s}s} \subseteq \units{\groupoidofproperfilters}$. The inclusion $\sourcesymbol{\properfiltersIB{s}} \subseteq \properfiltersIB{\sinv{s}s}$ follows from parts \cref{closed_under_inverse,fsft_fst}. Part~\cref{fsft_fst} implies that for the reverse inclusion, it is enough to show that $\properfiltersIB{\sinv{s}}\properfiltersIB{s} \subseteq \sourcesymbol{\properfiltersIB{s}}$. Fix $(\properfilterone,\properfiltertwo) \in (\properfiltersIB{\sinv{s}} \times \properfiltersIB{s}) \cap \comps{\groupoidofproperfilters}$, so that $\filtercompose{\properfilterone}{\properfiltertwo} \in \properfiltersIB{\sinv{s}}\properfiltersIB{s}$. Then $\sourcesymbol{\properfilterone} = \sourcesymbol{\finv{\properfiltertwo}}$ and $\properfilterone,\finv{\properfiltertwo} \in \properfiltersIB{\sinv{s}}$, so $\properfilterone = \finv{\properfiltertwo}$ by \cref{lem:calc}\cref{item2:lemcalc}. Thus, $\filtercompose{\properfilterone}{\properfiltertwo} = \filtercompose{\finv{\properfiltertwo}}{\properfiltertwo} = \sourcesymbol{\properfiltertwo} \in \sourcesymbol{\properfiltersIB{s}}$. Part~\cref{etale_basis} is a consequence of parts \cref{closed_under_inverse,fsft_fst,dfs_fsinvss}.
\end{proof}

By \cite[Proposition~6.6]{BS19}, a groupoid endowed with a topology is \'etale if and only if the topology has an \'etale basis. Thus, \cref{prop:gopf_etale_inclusive} is immediate from \cref{lem:pff_onto_lemma}\cref{etale_basis}.

\begin{proposition} \label{prop:gopf_etale_inclusive}
The groupoid $\groupoidofproperfilters$ of proper filters is \'etale with respect to the topology generated by $\properfiltersIBcollection$.
\end{proposition}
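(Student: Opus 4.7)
My plan is to deduce the proposition as an immediate corollary of \cref{lem:pff_onto_lemma}\cref{etale_basis} combined with the characterization of \'etale groupoids given in \cite[Proposition~6.6]{BS19}. That result states that a groupoid endowed with a topology is \'etale if and only if its topology admits an \'etale basis in the sense defined in \cref{sec:preliminaries}. In particular, the existence of an \'etale basis already forces composition and inversion to be continuous, so one obtains both that $\groupoidofproperfilters$ is a topological groupoid under the topology generated by $\properfiltersIBcollection$ and that its source (equivalently, range) map is a local homeomorphism.

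Since \cref{lem:pff_onto_lemma}\cref{etale_basis} directly asserts that $\properfiltersIBcollection$ is an \'etale basis for the topology on $\groupoidofproperfilters$, the proof reduces to a single invocation of \cite[Proposition~6.6]{BS19}. All of the substantive work has already been completed in \cref{lem:pff_onto_lemma}: parts \cref{closed_under_inverse}, \cref{fsft_fst}, and \cref{dfs_fsinvss} supply, respectively, closure of $\properfiltersIBcollection$ under inversion, closure under products, and containment of $\ginv{\properfiltersIB{s}}\properfiltersIB{s}$ in $\units{\groupoidofproperfilters}$, which together constitute exactly the three conditions in the definition of an \'etale basis. Consequently, no further obstacle arises, and the proposition follows immediately.
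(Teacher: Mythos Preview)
Your proposal is correct and matches the paper's own proof essentially verbatim: the paper also observes that \cref{prop:gopf_etale_inclusive} is immediate from \cref{lem:pff_onto_lemma}\cref{etale_basis} together with \cite[Proposition~6.6]{BS19}. Your additional remarks unpacking which parts of \cref{lem:pff_onto_lemma} verify the \'etale-basis axioms are accurate but already contained in the proof of that lemma.
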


\begin{remark}
Although the topology generated by $\properfiltersIBcollection$ is nice enough to make $\groupoidofproperfilters$ \'etale, it can be shown that $\units{\groupoidofproperfilters}$ is non-Hausdorff, provided there are distinct $e,f \in \E$ such that $ef \ne 0$. If the intention is to find a groupoid with a Hausdorff unit space, \cref{prop:ultrafilters_inclusive} tells us that it suffices to restrict to the groupoid $\groupoidofultrafilters$ of ultrafilters. We would rather keep all proper filters on hand for now, so instead of taking a smaller groupoid we refine the topology to the patch topology on the same groupoid.
\end{remark}

\begin{corollary} \label{cor:gopf_etale_Hausdorffunits_patch}
The groupoid $\groupoidofproperfilters$ of proper filters has a Hausdorff unit space and is \'etale with respect to the patch topology.
\end{corollary}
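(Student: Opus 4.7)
The plan is to verify the two assertions separately. The Hausdorff property is immediate from \cref{prop:gopfunits_homeomorphic_to_setofefilters_patch}, which identifies $\units{\groupoidofproperfilters}$ via the homeomorphism $\iofname$ with the patch space $\setofEproperfilters$, already shown Hausdorff. For the \'etale property with respect to the patch topology, I will show that the basis $\properfiltersPBcollection$ is \'etale, so that the conclusion follows from \cite[Proposition~6.6]{BS19}.

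Three properties of the basis must be checked. Inversion is routine: $\inv{\properfiltersPB{s}{T}} = \properfiltersPB{\sinv{s}}{\finv{T}}$, using that $a \mapsto \sinv{a}$ is an order isomorphism for $\npo$. The main item is composition, for which I claim
\[
\properfiltersPB{s}{T} \cdot \properfiltersPB{u}{V} = \properfiltersPB{su}{Tu \cup sV},
\]
where $Tu \coloneqq \{tu : t \in T\}$ and $sV \coloneqq \{sv : v \in V\}$ are finite subsets of $\down{su}$ (by monotonicity of multiplication). The containment $\inv{\properfiltersPB{s}{T}} \cdot \properfiltersPB{s}{T} \subseteq \units{\groupoidofproperfilters}$ then follows by combining the first two items with \cref{lem:pff_onto_lemma}\cref{dfs_fsinvss}, since the product is a subset of $\properfiltersIB{\sinv{s}s} \subseteq \units{\groupoidofproperfilters}$.

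The $\subseteq$ direction of the product formula is the main obstacle. The easier $\supseteq$ direction uses \cref{lem:pff_onto_lemma}\cref{fsft_fst}: for $H$ in the right-hand side, write $H = F\cdot G$ with $F \in \properfiltersIB{s}$ and $G \in \properfiltersIB{u}$; any $t \in F \cap T$ would yield $tu \in FG \subseteq H$, contradicting $H \cap Tu = \varnothing$, and $V$ is symmetric. For $\subseteq$, assume $(F,G)$ is composable with $F \in \properfiltersPB{s}{T}$, $G \in \properfiltersPB{u}{V}$, and suppose $tu \in F\cdot G$ for some $t \in T$; pick $f \in F$, $g \in G$ with $fg \npo tu$. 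The key observation is that $fg\sinv{u}$ satisfies $fg\sinv{u} \npo tu\sinv{u} \npo t$ (right-multiplying by $\sinv{u}$ and using that $u\sinv{u}$ is idempotent) while simultaneously lying in $F\cdot G\cdot \finv{G} = F\cdot\rangesymbol{G} = F\cdot\sourcesymbol{F} = F$ (the groupoid identities $G\cdot\finv{G} = \rangesymbol{G}$ and $F\cdot\sourcesymbol{F} = F$, interpreted as filter equalities). Up-closure of $F$ then forces $t \in F$, a contradiction. The case $sv \in F\cdot G$ is handled symmetrically via $\sinv{s}fg \in \rangesymbol{G}\cdot G = G$.
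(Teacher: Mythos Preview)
Your argument is correct, and the product formula $\properfiltersPB{s}{T}\cdot\properfiltersPB{u}{V}=\properfiltersPB{su}{Tu\cup sV}$ together with its proof checks out; the crucial observation that $fg\sinv{u}\in F\cdot G\cdot\finv{G}=F\cdot\sourcesymbol{F}=F$ is exactly what is needed, and the symmetric case is handled the same way.  Your route, however, differs from the paper's.  The paper does \emph{not} verify that $\properfiltersPBcollection$ is an \'etale basis; instead it imports the fact that $\groupoidofproperfilters$ is a topological groupoid under the patch topology from \cite[Proposition~4.3]{Len08}, and then argues that since the patch topology refines the topology generated by $\properfiltersIBcollection$ (already shown \'etale in \cref{prop:gopf_etale_inclusive}), \'etaleness persists.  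Your approach is more self-contained---it avoids the external citation to Lenz and yields the explicit product formula as a byproduct, which is of independent interest---while the paper's argument is shorter because the hard work (continuity of the operations) is outsourced.  One small point: as the paper phrases it, the \'etale-basis criterion of \cite[Proposition~6.6]{BS19} is stated for topological groupoids, so strictly you should note that your inversion and product formulas already give continuity of inversion and (via the product formula read backwards, together with \cref{lem:pff_onto_lemma}\cref{fsft_fst}) of multiplication; alternatively, the Bice--Starling result in fact delivers the topological-groupoid structure directly from the basis conditions, which is how the paper itself uses it in \cref{prop:gopf_etale_inclusive}.
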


\begin{proof}
The groupoid $\groupoidofproperfilters$ of proper filters is a topological groupoid with respect to the patch topology by \cite[Proposition~4.3]{Len08}. Since the patch topology is finer than the topology generated by $\properfiltersIBcollection$, $\groupoidofproperfilters$ is \'etale by \cref{prop:gopf_etale_inclusive}, and $\units{\groupoidofproperfilters}$ is Hausdorff by \cref{prop:gopfunits_homeomorphic_to_setofefilters_patch}.
\end{proof}

Henceforth, the groupoid $\groupoidofproperfilters$ of proper filters is endowed with the patch topology.

\subsection{Groupoids of germs}

The initial data used to construct a groupoid of germs employs the notion of an `action' of $S$ on a nice topological space $X$ \cite[Definition~3.1]{EP16}. We are exclusively interested in the `standard' action $\beta$ of $S$ on $\setofEproperfilters$ described in \cite[Section~3]{EP16}, so we neglect the details of inverse semigroup actions for brevity, introducing structure only as necessary.

Define
$\gogrepset{}{}{} \coloneqq \{(s,\Efilterone) \in S \times \setofEproperfilters : \Efilterone \in \EfiltersIB{\sinv{s}s}\}$, and define a relation $\germequivalence{}{}{}$ on $\gogrepset{}{}{}$ by $(s,\Efilterone) \germequivalence{}{}{} (t,\Efiltertwo)$ if and only if there is some $e \in \Efilterone = \Efiltertwo$ such that $se = te$. The relation $\germequivalence{}{}{}$ is an equivalence relation on $\gogrepset{}{}{}$. We denote the equivalence class of $(s,\Efilterone)$ by $\germ{s}{\Efilterone}$. Define $\groupoidofpropergerms \coloneqq \gogrepset{}{}{} /\! \germequivalence{}{}{}$. For each $s \in S$ and $\Efilterone \in \EfiltersIB{\sinv{s}s}$, define
\[
\standardSEf{s}(\Efilterone) \coloneqq \{ f \in \E : se\sinv{s} \npo f \text{ for some } e \in \Efilterone \},
\]
as per \cite[Equation~(3.4)]{EP16}. Notice that $\standardSEf{s}(\Efilterone) \in \EfiltersIB{s\sinv{s}}$. Define
\[
\comps{\groupoidofpropergerms} \coloneqq \{(\germ{s}{\Efilterone}, \germ{t}{\Efiltertwo}) \in \groupoidofpropergerms \times \groupoidofpropergerms : \Efilterone = \standardSEf{t}(\Efiltertwo) \} \subseteq \groupoidofpropergerms \times \groupoidofpropergerms.
\]

The set $\groupoidofpropergerms$ with the composable pairs $\comps{\groupoidofpropergerms}$ and the operations
\[
(\germ{s}{\standardSEf{t}(\Efiltertwo)}, \germ{t}{\Efiltertwo}) \mapsto \germ{s}{\standardSEf{t}(\Efiltertwo)}\germ{t}{\Efiltertwo} \coloneqq \germ{st}{\Efiltertwo} \ \text{ and } \ \germ{s}{\Efilterone} \mapsto \ginv{\germ{s}{\Efilterone}} \coloneqq \germ{\sinv{s}}{\standardSEf{s}(\Efilterone)}
\]
forms a groupoid, which we call the \emph{groupoid of proper germs}. The unit space of $\groupoidofpropergerms$ is the set $\units{\groupoidofpropergerms} = \{ \germ{e}{\Efilterone} : \Efilterone \in \EfiltersIB{e} \}$, which may be identified with $\setofEproperfilters$ via the bijection $\germ{e}{\Efilterone} \mapsto \Efilterone$ from \cite[Equation~(3.9)]{EP16}.

Due to \cite[Proposition~12.8]{Exe08} and \cite[Proposition~3.5]{EP16}, we have that
\[
\standardSEf{s}(\EfiltersIB{\sinv{s}s}\cap\setoftightEfilters) \subseteq \setoftightEfilters \ \text{ and } \ \standardSEf{s}(\EfiltersIB{\sinv{s}s}\cap \setofEultrafilters) \subseteq \setofEultrafilters, \ \text{ for all } s \in S.
\]
Thus, $\setoftightEfilters$ and $\setofEultrafilters$ are invariant subsets of $\units{\groupoidofpropergerms}$, and the corresponding reductions $\groupoidoftightgerms$ and $\groupoidofultragerms$ are subgroupoids of $\groupoidofpropergerms$ with unit spaces $\setoftightEfilters$ and $\setofEultrafilters$, respectively. Note that $\groupoidoftightgerms$ is Exel's tight groupoid from \cite[Theorem~13.3]{Exe08}. We call $\groupoidofultragerms$ the \emph{groupoid of ultragerms}.

For each $s \in S$ and each open subset $\openone\subseteq\EfiltersIB{\sinv{s}s}$, define $\propergermsbasic{s}{\openone} \coloneqq \{\germ{s}{\Efilterone} \in \groupoidofpropergerms : \Efilterone \in \openone\}$. The collection of all such sets $\propergermsbasic{s}{\openone}$ is a basis for a topology on $\groupoidofpropergerms$, with respect to which $\groupoidofpropergerms$ is a locally compact \'etale groupoid with a Hausdorff unit space \cite[Section~3]{EP16}.

\begin{lemma} \label{lem:pff_wd_rp}
Fix $\properfilterone \in \groupoidofproperfilters$, and define $\Efilterofproperfilter{\properfilterone} \coloneqq \sourcesymbol{\properfilterone} \cap \E$.
\begin{enumerate}[label=(\alph*)]
\item \label{item:pff_notation} We have $\Efilterofproperfilter{\properfilterone} \in \setofEproperfilters$ and $\Efilterofproperfilter{\ultrafilterone} \in \setofEultrafilters$, for all $\ultrafilterone \in \groupoidofultrafilters$.
\item \label{item:pff_well_defined} For all $s, t \in \properfilterone$, we have $\germ{s}{\Efilterofproperfilter{\properfilterone}} = \germ{t}{\Efilterofproperfilter{\properfilterone}}$.
\item \label{item:pff_range_preserving} For all $s \in \properfilterone$, we have $\standardSEf{s}(\Efilterofproperfilter{\properfilterone}) = \Efilterofproperfilter{\finv{\properfilterone}}$.
\end{enumerate}
\end{lemma}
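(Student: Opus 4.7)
My plan is to address the three parts in order, relying on \cref{prop:gopfunits_homeomorphic_to_setofefilters_patch} for the identifications and on the down-directedness of filters together with monotonicity of multiplication for the element-level arguments.

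For part \cref{item:pff_notation}, I observe that for any $s \in \properfilterone$ the idempotent $\sinv{s}s$ lies in $\finv{\properfilterone}\properfilterone \subseteq \sourcesymbol{\properfilterone}$, so $\sourcesymbol{\properfilterone}$ is an idempotent proper filter, and \cref{prop:gopfunits_homeomorphic_to_setofefilters_patch} yields $\Efilterofproperfilter{\properfilterone} = \iof{\sourcesymbol{\properfilterone}} \in \setofEproperfilters$. For an ultrafilter $\ultrafilterone$, since $\groupoidofultrafilters$ is a subgroupoid of $\groupoidofproperfilters$, $\sourcesymbol{\ultrafilterone}$ is an idempotent ultrafilter, and the second clause of \cref{prop:gopfunits_homeomorphic_to_setofefilters_patch} gives $\Efilterofproperfilter{\ultrafilterone} \in \setofEultrafilters$. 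For part \cref{item:pff_well_defined}, down-directedness of $\properfilterone$ supplies $u \in \properfilterone$ with $u \npo s, t$; setting $e \coloneqq \sinv{u}u \in \E$, we have $se = s\sinv{u}u = u = t\sinv{u}u = te$, and $e \in \finv{\properfilterone}\properfilterone \subseteq \sourcesymbol{\properfilterone}$ gives $e \in \Efilterofproperfilter{\properfilterone}$, so the two germs coincide.

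For part \cref{item:pff_range_preserving}, I first rewrite the target as $\Efilterofproperfilter{\finv{\properfilterone}} = \sourcesymbol{\finv{\properfilterone}} \cap \E = \rangesymbol{\properfilterone} \cap \E$. The key computational identity is: if $c \in \properfilterone$ satisfies $c \npo s$, then with $e \coloneqq \sinv{c}c$ one has $c = se$ and hence $c\sinv{c} = se\sinv{s}$. For the inclusion $(\subseteq)$, given $f \in \standardSEf{s}(\Efilterofproperfilter{\properfilterone})$ with witness $e \in \Efilterofproperfilter{\properfilterone}$ satisfying $se\sinv{s} \npo f$, I pick $a, b \in \properfilterone$ with $\sinv{a}b \npo e$, then use down-directedness to choose $c \in \properfilterone$ with $c \npo a, b, s$. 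Monotonicity of multiplication gives $\sinv{c}c \npo \sinv{a}b \npo e$, and the identity then yields $c\sinv{c} = s\sinv{c}c\sinv{s} \npo se\sinv{s} \npo f$; since $c\sinv{c} \in \properfilterone\finv{\properfilterone} \subseteq \rangesymbol{\properfilterone}$, the up-set property forces $f \in \rangesymbol{\properfilterone}$. For $(\supseteq)$, given $f \in \rangesymbol{\properfilterone} \cap \E$, I pick $p, q \in \properfilterone$ with $p\sinv{q} \npo f$ and then $c \in \properfilterone$ with $c \npo p, q, s$; monotonicity gives $c\sinv{c} \npo p\sinv{q} \npo f$, and $e \coloneqq \sinv{c}c \in \Efilterofproperfilter{\properfilterone}$ satisfies $se\sinv{s} = c\sinv{c} \npo f$, so $f \in \standardSEf{s}(\Efilterofproperfilter{\properfilterone})$.

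The principal obstacle lies in part \cref{item:pff_range_preserving}: the identity $c\sinv{c} = se\sinv{s}$ (for $c \in \properfilterone$ with $c \npo s$ and $e = \sinv{c}c$) is the essentially unique bridge between the abstract description $\rangesymbol{\properfilterone} = \up{(\properfilterone\finv{\properfilterone})}$ and the explicit formula defining $\standardSEf{s}$, and selecting the right common lower bound $c \in \properfilterone$ dominated by both $s$ and the witnesses for membership in $\sourcesymbol{\properfilterone}$ or $\rangesymbol{\properfilterone}$ is what makes the translation go through.
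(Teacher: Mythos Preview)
Your proof is correct and follows essentially the same approach as the paper's: part~\cref{item:pff_notation} via \cref{prop:gopfunits_homeomorphic_to_setofefilters_patch}, part~\cref{item:pff_well_defined} via a common lower bound $u$ with $s\sinv{u}u = u = t\sinv{u}u$, and part~\cref{item:pff_range_preserving} by translating between $\standardSEf{s}$ and $\rangesymbol{\properfilterone} \cap \E$ through a suitably chosen element of $\properfilterone$ below $s$. The only noticeable difference is stylistic: in the $(\subseteq)$ direction of part~\cref{item:pff_range_preserving} the paper argues in one line that $se\sinv{s} \in \filtercompose{\filtercompose{\properfilterone}{\sourcesymbol{\properfilterone}}}{\finv{\properfilterone}} = \rangesymbol{\properfilterone}$, whereas you unpack this via your identity $c\sinv{c} = s(\sinv{c}c)\sinv{s}$, and in the $(\supseteq)$ direction the paper uses the idempotent $\sinv{s}a\sinv{a}s$ as the witness in $\Efilterofproperfilter{\properfilterone}$ rather than your $\sinv{c}c$---but both routes rest on the same down-directedness trick and arrive at the same place.
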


\begin{proof}
For part~\cref{item:pff_notation}, fix $\properfilterone \in \groupoidofproperfilters$ and $\ultrafilterone \in \groupoidofultrafilters$. Then $\sourcesymbol{\properfilterone} \in \units{\groupoidofproperfilters}$ and $\sourcesymbol{\ultrafilterone} \in \units{\groupoidofultrafilters}$, so \cref{prop:gopfunits_homeomorphic_to_setofefilters_patch} implies that $\Efilterofproperfilter{\properfilterone} = \iof{\sourcesymbol{\properfilterone}} \in \setofEproperfilters$ and $\Efilterofproperfilter{\ultrafilterone} = \iof{\sourcesymbol{\ultrafilterone}} \in \setofEultrafilters$.

For part~\cref{item:pff_well_defined}, fix $s, t \in \properfilterone$. Choose $u \in \properfilterone$ such that $u \npo s,t$. Observe that $\sinv{u}u \in \Efilterofproperfilter{\properfilterone}$, so it suffices to show that $s\sinv{u}u = t\sinv{u}u$, which holds since $s\sinv{u}u = u = t\sinv{u}u$ by the definition of $\npo$.

For part~\cref{item:pff_range_preserving}, fix $s \in \properfilterone$. Since $\Efilterofproperfilter{\finv{\properfilterone}} = \sourcesymbol{\finv{\properfilterone}} \cap \E = \rangesymbol{\properfilterone} \cap \E$, it suffices to show that $\standardSEf{s}(\Efilterofproperfilter{\properfilterone}) = \rangesymbol{\properfilterone} \cap \E$. Fix $f \in \standardSEf{s}(\Efilterofproperfilter{\properfilterone}) \subseteq \E$. Then $se\sinv{s} \npo f$ for some $e \in \Efilterofproperfilter{\properfilterone}$. Since $\sinv{s} \in \finv{\properfilterone}$ and $e \in \Efilterofproperfilter{\properfilterone} = \sourcesymbol{\properfilterone} \cap \E$, we have
\[
f \nporeverse se\sinv{s} \in \filtercompose{\filtercompose{\properfilterone}{\sourcesymbol{\properfilterone}}}{\finv{\properfilterone}} = \filtercompose{\properfilterone}{\finv{\properfilterone}} = \rangesymbol{\properfilterone},
\]
and hence $f \in \rangesymbol{\properfilterone} \cap \E$. It remains to show that $\rangesymbol{\properfilterone} \cap \E \subseteq \standardSEf{s}(\Efilterofproperfilter{\properfilterone})$. Fix $x \in \rangesymbol{\properfilterone} \cap \E$. Since $\rangesymbol{\properfilterone} = \up{(\properfilterone \sinv{\properfilterone})}$, there exists $a \in \properfilterone$ such that $a\sinv{a} \npo x$. Choose $b \in \properfilterone$ such that $b \npo a,s$. Observe that $\sinv{s}a\sinv{a}s \nporeverse \sinv{b}b\sinv{b}b = \sinv{b}b \in \sourcesymbol{\properfilterone}$, and so $\sinv{s}a\sinv{a}s = \sinv{(\sinv{a}s)}\sinv{a}s \in \sourcesymbol{\properfilterone} \cap \E = \Efilterofproperfilter{\properfilterone}$. Now notice that
\[
s(\sinv{s}a\sinv{a}s)\sinv{s} = s\sinv{s}(a\sinv{a})(s\sinv{s}) = s\sinv{s}s\sinv{s}a\sinv{a} = s\sinv{s}a\sinv{a} \npo a\sinv{a} ~\npo x,
\]
from which it follows that $x \in\standardSEf{s}(\Efilterofproperfilter{\properfilterone})$, by the definition of $\standardSEf{s}$.
\end{proof}

\section{Mapping filters to germs} \label{sec:filters_to_germs}

Recall from \cref{lem:pff_wd_rp} that for each $\properfilterone \in \groupoidofproperfilters$, we have $\Efilterofproperfilter{\properfilterone} \coloneqq \sourcesymbol{\properfilterone} \cap \E \in \setofEproperfilters$ and $\germ{s}{\Efilterofproperfilter{\properfilterone}} = \germ{t}{\Efilterofproperfilter{\properfilterone}}$ for all $s, t \in \properfilterone$.

\begin{theorem} \label{thm:pff}
The map $\pff\colon \groupoidofproperfilters \to \groupoidofpropergerms$ given by $\pff(\properfilterone) \coloneqq \germ{s}{\Efilterofproperfilter{\properfilterone}}$ for any $s \in \properfilterone$ is a topological groupoid isomorphism, with inverse given by $\inv{\pff}(\germ{s}{\Efilterone}) = \upbrackets{s\Efilterone}$.
\end{theorem}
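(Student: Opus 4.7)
The plan is to verify that $\pff$ and the proposed inverse $\inv\pff$ are well-defined mutual bijections, check that $\pff$ preserves the groupoid structure, and finally establish continuity in both directions. Well-definedness of $\pff$ is \cref{lem:pff_wd_rp}\cref{item:pff_well_defined}. For $\inv\pff(\germ{s}{\Efilterone}) \coloneqq \upbrackets{s\Efilterone}$, I would check: (i) $\upbrackets{s\Efilterone}$ is a filter, since $\Efilterone$ is down-directed and multiplication preserves $\npo$; (ii) it is proper, for if $0 \in \upbrackets{s\Efilterone}$ then $se = 0$ for some $e \in \Efilterone$, whence $e\sinv{s}s = \sinv{s}se = 0$ would lie in $\Efilterone$ (closure under meets with $\sinv{s}s \in \Efilterone$), contradicting properness; (iii) independence of the representative, for from $se_0 = te_0$ on some $e_0 \in \Efilterone$, down-directedness refines any $f \in \Efilterone$ to $g \npo e_0, f$ with $sg = tg$, giving $\upbrackets{s\Efilterone} = \upbrackets{t\Efilterone}$. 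For the bijection, the composition $\inv\pff \circ \pff$ at $\properfilterone$ becomes $\upbrackets{s\Efilterofproperfilter{\properfilterone}}$ for any $s \in \properfilterone$; using down-directedness of $\sourcesymbol{\properfilterone}$ at $\sinv{s}s$, this coincides with $\upbrackets{s\sourcesymbol{\properfilterone}} = \properfilterone$ by \cref{lem:calc}\cref{item1:lemcalc}, while the reverse composition reduces to a direct verification that $\Efilterofproperfilter{\upbrackets{s\Efilterone}} = \Efilterone$.

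For the groupoid structure, given composable $(\properfilterone, \properfiltertwo)$ with $s \in \properfilterone$, $t \in \properfiltertwo$, \cref{lem:pff_wd_rp}\cref{item:pff_range_preserving} yields
\[
\standardSEf{t}(\Efilterofproperfilter{\properfiltertwo}) = \Efilterofproperfilter{\finv{\properfiltertwo}} = \rangesymbol{\properfiltertwo} \cap \E = \sourcesymbol{\properfilterone} \cap \E = \Efilterofproperfilter{\properfilterone},
\]
so the images are composable in $\groupoidofpropergerms$ with product $\germ{st}{\Efilterofproperfilter{\properfiltertwo}}$. Since $st \in \filtercompose{\properfilterone}{\properfiltertwo}$ and $\sourcesymbol{\filtercompose{\properfilterone}{\properfiltertwo}} = \sourcesymbol{\properfiltertwo}$, this equals $\pff(\filtercompose{\properfilterone}{\properfiltertwo})$.

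For the topology, continuity of $\pff$ follows from the identity
\[
\pff^{-1}\bigl(\propergermsbasic{s}{\openone}\bigr) = \properfiltersIB{s} \cap \sourcesymbolname^{-1}\bigl(\iofinv{\openone}\bigr),
\]
which is open because $\sourcesymbolname$ is continuous by \cref{cor:gopf_etale_Hausdorffunits_patch} and $\iofname$ is a homeomorphism by \cref{prop:gopfunits_homeomorphic_to_setofefilters_patch}. For openness, I would show that a patch basic open $\properfiltersPB{a}{T}$ maps to $\propergermsbasic{a}{\EfiltersPB{\sinv{a}a}{X}}$, where $X \coloneqq \{\sinv{a}t : t \in T\} \finsubseteq \down{\sinv{a}a}$. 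The main technical obstacle is the characterization $t \in \upbrackets{a\Efilterone} \iff \sinv{a}t \in \Efilterone$ for $t \npo a$, which reduces to the idempotent identity $ae \npo t \iff e \npo \sinv{a}t$ whenever $e \npo \sinv{a}a$; once this is established, the image is manifestly a basis element of $\groupoidofpropergerms$ and $\inv\pff$ is continuous, completing the proof.
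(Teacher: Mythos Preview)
Your proof is correct and your treatment of the groupoid-homomorphism step and the continuity of $\pff$ matches the paper's exactly. The two substantive differences are in how you establish bijectivity and openness.

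For bijectivity, the paper proves injectivity and surjectivity separately: injectivity uses that each $\properfiltersIB{se}$ is a local bisection (\cref{lem:pff_onto_lemma}\cref{local_bisection}), and surjectivity uses $\sourcesymbol{\properfiltersIB{s}} = \properfiltersIB{\sinv{s}s}$ (\cref{lem:pff_onto_lemma}\cref{dfs_fsinvss}) together with the bijection $\iofname$; the inverse formula is then read off via \cref{lem:calc}\cref{item1:lemcalc}. You instead construct $\inv\pff$ directly and verify the two composites are identities. Your route is more self-contained---it avoids invoking \cref{lem:pff_onto_lemma} entirely---at the cost of the deferred ``direct verification'' that $\Efilterofproperfilter{\upbrackets{s\Efilterone}} = \Efilterone$, which does require unwinding $\sourcesymbol{\upbrackets{s\Efilterone}}$ (e.g.\ by showing it equals $\up{\Efilterone}$ in $S$, using that every element of $\upbrackets{s\Efilterone}$ dominates some $sf$ with $f \in \Efilterone$, whence $z^*z \ge f\sinv{s}s \in \Efilterone$).

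For openness, the paper argues abstractly: $\pff(\properfiltersPB{s}{T}) = \propergermsbasic{s}{\iof{\sourcesymbol{\properfiltersPB{s}{T}}}}$, which is open because $\sourcesymbolname$ is open (\'etaleness, \cref{cor:gopf_etale_Hausdorffunits_patch}) and $\iofname$ is a homeomorphism. Your explicit identification $\pff(\properfiltersPB{a}{T}) = \propergermsbasic{a}{\EfiltersPB{\sinv{a}a}{X}}$ with $X = \{\sinv{a}t : t \in T\}$ is a genuine strengthening---it exhibits the image as a \emph{basic} open set and does not appeal to the \'etale structure. The idempotent equivalence $ae \npo t \iff e \npo \sinv{a}t$ (for $e \npo \sinv{a}a$ and $t \npo a$) that you isolate is exactly the right lemma, and it is straightforward: one direction uses $a\sinv{a}t = t$, the other $\sinv{a}ae = e$.
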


\begin{proof}
We begin by showing that $\pff$ is injective. Suppose that $\pff(\properfilterone) = \pff(\properfiltertwo)$ for some $\properfilterone,\properfiltertwo \in\groupoidofproperfilters$. Choose any $s \in \properfilterone$ and $t \in \properfiltertwo$. Then $\germ{s}{\Efilterofproperfilter{\properfilterone}} = \pff(\properfilterone) = \pff(\properfiltertwo) = \germ{t}{\Efilterofproperfilter{\properfiltertwo}}$, so there exists $e \in \Efilterofproperfilter{\properfilterone} = \Efilterofproperfilter{\properfiltertwo}$ such that $se = te$. Since $e \in \Efilterofproperfilter{\properfilterone} \subseteq \sourcesymbol{\properfilterone}$, we have $se \in \filtercompose{\properfilterone}{\sourcesymbol{\properfilterone}} = \properfilterone$; similarly, since $e \in \Efilterofproperfilter{\properfiltertwo} \subseteq \sourcesymbol{\properfiltertwo}$, we have $se = te \in \filtercompose{\properfiltertwo}{\sourcesymbol{\properfiltertwo}} = \properfiltertwo$. Thus, $F, G \in \properfiltersIB{se}$. Recall from \cref{prop:gopfunits_homeomorphic_to_setofefilters_patch} that the map $\iofname\colon \properfilterthree \mapsto \properfilterthree \cap \E$ is a bijection from $\idempotentfiltersof{\setofproperfilters}$ to $\setofEproperfilters$, with inverse given by $\iofinv{\Efilterone} = \up{\Efilterone}$. Thus
\[
\Efilterofproperfilter{\properfilterone} = \Efilterofproperfilter{\properfiltertwo} \implies \iof{\sourcesymbol{\properfilterone}} = \iof{\sourcesymbol{\properfiltertwo}} \implies \sourcesymbol{\properfilterone} = \sourcesymbol{\properfiltertwo}.
\]
Since $\properfiltersIB{se}$ is a local bisection by \cref{lem:pff_onto_lemma}\cref{local_bisection}, we deduce that $\properfilterone = \properfiltertwo$.

Next we show that $\pff$ is surjective. Fix $\germ{s}{\Efilterone} \in \groupoidofpropergerms$. Since $\Efilterone \in \EfiltersIB{\sinv{s}s}$, we have $\iofinv{\Efilterone} \in \properfiltersIB{\sinv{s}s} = \sourcesymbol{\properfiltersIB{s}}$ by \cref{prop:gopfunits_homeomorphic_to_setofefilters_patch} and \cref{lem:pff_onto_lemma}\cref{dfs_fsinvss}. Take $\properfilterone \in \properfiltersIB{s}$ such that $\sourcesymbol{\properfilterone} = \iofinv{\Efilterone}$. Then $s \in \properfilterone$ and $\Efilterofproperfilter{\properfilterone} = \iof{\sourcesymbol{\properfilterone}} = \iof{\iofinv{\Efilterone}} = \Efilterone$. It follows that $\pff(\properfilterone) = \germ{s}{\Efilterone}$, and hence $\pff$ is a bijection. Moreover, \cref{lem:calc}\cref{item1:lemcalc} gives
\[
\inv{\pff}(\germ{s}{\Efilterone}) = \inv{\pff}(\pff(\properfilterone)) = \properfilterone = \upbrackets{s\sourcesymbol{\properfilterone}} = \upbrackets{s \iofinv{\Efilterone}} = \upbrackets{s\up{\Efilterone}} = \upbrackets{s\Efilterone}.
\]

Now we show that $\pff$ is a groupoid isomorphism. Fix $(\properfilterone, \properfiltertwo) \in \comps{\groupoidofproperfilters}$. We must show that $(\pff(\properfilterone), \pff(\properfiltertwo)) \in \comps{\groupoidofpropergerms}$ and $\pff(\filtercompose{\properfilterone}{\properfiltertwo}) = \pff(\properfilterone)\pff(\properfiltertwo)$. Let $s \in \properfilterone$ and $t \in \properfiltertwo$. Then $\pff(\properfilterone) = \germ{s}{\Efilterofproperfilter{\properfilterone}}$ and $\pff(\properfiltertwo) = \germ{t}{\Efilterofproperfilter{\properfiltertwo}}$. By the definition of $\comps{\groupoidofpropergerms}$, we need $\Efilterofproperfilter{\properfilterone} = \standardSEf{t}(\Efilterofproperfilter{\properfiltertwo})$. Since $(\properfilterone, \properfiltertwo) \in \comps{\groupoidofproperfilters}$, we have $\sourcesymbol{\finv{\properfiltertwo}} = \rangesymbol{\properfiltertwo} = \sourcesymbol{\properfilterone}$, and hence \cref{lem:pff_wd_rp}\cref{item:pff_range_preserving} implies that
\[
\standardSEf{t}(\Efilterofproperfilter{\properfiltertwo}) = \Efilterofproperfilter{\finv{\properfiltertwo}} = \iof{\sourcesymbol{\finv{\properfiltertwo}}} = \iof{\sourcesymbol{\properfilterone}} = \Efilterofproperfilter{\properfilterone}.
\]
Now, since $st \in \filtercompose{\properfilterone}{\properfiltertwo}$ and $\Efilterofproperfilter{\filtercompose{\properfilterone}{\properfiltertwo}} = \sourcesymbol{\filtercompose{\properfilterone}{\properfiltertwo}} \cap \E = \sourcesymbol{\properfiltertwo} \cap \E = \Efilterofproperfilter{\properfiltertwo}$, we have
\[
\pff(\filtercompose{\properfilterone}{\properfiltertwo}) = \germ{st}{\Efilterofproperfilter{\filtercompose{\properfilterone}{\properfiltertwo}}} = \germ{st}{\Efilterofproperfilter{\properfiltertwo}} = \germ{s}{\Efilterofproperfilter{\properfilterone}}\germ{t}{\Efilterofproperfilter{\properfiltertwo}} = \pff(\properfilterone)\pff(\properfiltertwo).
\]

To see that $\pff$ is continuous, fix $s \in S$ and take any open set $\openone \subseteq \EfiltersIB{\sinv{s}s}$. Then $\propergermsbasic{s}{\openone}$ is an arbitrary basic open set in $\groupoidofpropergerms$. A routine argument shows that \[
\inv{\pff}(\propergermsbasic{s}{\openone}) = \properfiltersIB{s} \cap \inversesourcesymbol{\iofinv{\openone}}.
\]
Since $\sourcesymbolname$ is continuous and \cref{prop:gopfunits_homeomorphic_to_setofefilters_patch} implies that $\iofinv{\openone}$ is open in $\idempotentfiltersof{\setofproperfilters}$, we deduce that $\inversesourcesymbol{\iofinv{\openone}}$ is open in $\groupoidofproperfilters$. Since $\properfiltersIB{s}$ is also open, $\inv{\pff}(\propergermsbasic{s}{\openone}) = \properfiltersIB{s} \cap \inversesourcesymbol{\iofinv{\openone}}$ is open in $\groupoidofproperfilters$.

Finally, to see that $\pff\colon \groupoidofproperfilters \to \groupoidofpropergerms$ is open, fix $s \in S$ and let $T \finsubseteq \down{s}$, so that $\properfiltersPB{s}{T}$ is a basic open set in $\groupoidofproperfilters$. Since both $\textbf{d}$ and $\epsilon$ are open maps by \cref{cor:gopf_etale_Hausdorffunits_patch,prop:gopfunits_homeomorphic_to_setofefilters_patch} respectively, we deduce that $\pff(\properfiltersPB{s}{T}) = \propergermsbasic{s}{\iof{\sourcesymbol{\properfiltersPB{s}{T}}}}$ is a basic open set in $\groupoidofpropergerms$, and hence $\pff$ is open.
\end{proof}

The map $\pff\colon \groupoidofproperfilters \to \groupoidofpropergerms$ restricts nicely to the ultrafilters.

\begin{lemma} \label{lem:uff_well_defined_lemma}
We have $\pff(\groupoidofultrafilters) = \groupoidofultragerms$.
\end{lemma}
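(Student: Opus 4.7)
The plan is to verify both containments separately. For the forward inclusion $\pff(\groupoidofultrafilters) \subseteq \groupoidofultragerms$, fix $\ultrafilterone \in \groupoidofultrafilters$ and any $s \in \ultrafilterone$. Then \cref{lem:pff_wd_rp}\cref{item:pff_notation} immediately gives $\Efilterofproperfilter{\ultrafilterone} \in \setofEultrafilters$, so the germ $\pff(\ultrafilterone) = \germ{s}{\Efilterofproperfilter{\ultrafilterone}}$ belongs to the reduction of $\groupoidofpropergerms$ to $\setofEultrafilters$, which is precisely $\groupoidofultragerms$.

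For the reverse inclusion $\groupoidofultragerms \subseteq \pff(\groupoidofultrafilters)$, fix $\germ{s}{\Efilterone} \in \groupoidofultragerms$, so $\Efilterone \in \setofEultrafilters$. By \cref{thm:pff}, the element $\properfilterone \coloneqq \inv{\pff}(\germ{s}{\Efilterone}) = \upbrackets{s\Efilterone}$ lies in $\groupoidofproperfilters$ and satisfies $\Efilterofproperfilter{\properfilterone} = \Efilterone$; \cref{prop:gopfunits_homeomorphic_to_setofefilters_patch} then tells us that $\sourcesymbol{\properfilterone} = \iofinv{\Efilterone}$ is an idempotent \emph{ultra}filter. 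The task reduces to promoting this information about $\sourcesymbol{\properfilterone}$ into a statement about $\properfilterone$, namely that $\properfilterone$ itself is an ultrafilter; I expect this to be the main obstacle.

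To finish, suppose $\properfilterone \subseteq \properfiltertwo$ for some $\properfiltertwo \in \setofproperfilters$. Since $\finv{\properfilterone} \subseteq \finv{\properfiltertwo}$ and upward closure is monotone in its argument, expanding $\sourcesymbol{\properfilterone} = \upbrackets{\finv{\properfilterone}\properfilterone}$ and $\sourcesymbol{\properfiltertwo} = \upbrackets{\finv{\properfiltertwo}\properfiltertwo}$ yields $\sourcesymbol{\properfilterone} \subseteq \sourcesymbol{\properfiltertwo}$. Because $\properfiltertwo$ is proper, $\sourcesymbol{\properfiltertwo}$ is an idempotent proper filter, so maximality of the ultrafilter $\sourcesymbol{\properfilterone}$ forces $\sourcesymbol{\properfilterone} = \sourcesymbol{\properfiltertwo}$. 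Since $\properfilterone \cap \properfiltertwo \supseteq \properfilterone \neq \varnothing$, \cref{lem:calc}\cref{item2:lemcalc} then gives $\properfilterone = \properfiltertwo$. Hence $\properfilterone$ is maximal among proper filters, i.e., $\properfilterone \in \groupoidofultrafilters$, and therefore $\germ{s}{\Efilterone} = \pff(\properfilterone) \in \pff(\groupoidofultrafilters)$, completing the proof.
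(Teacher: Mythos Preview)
Your proof is correct. The forward inclusion is identical to the paper's, and for the reverse inclusion both arguments reach the same intermediate point---namely that $\sourcesymbol{\properfilterone}$ is an idempotent ultrafilter via \cref{prop:gopfunits_homeomorphic_to_setofefilters_patch}---but then diverge in how they upgrade this to $\properfilterone \in \groupoidofultrafilters$. The paper invokes an external result (\cite[Proposition~2.41]{Bic21}) asserting that $\groupoidofultrafilters$ is an ideal in $\groupoidofproperfilters$, so that $\properfilterone = \filtercompose{\properfilterone}{\sourcesymbol{\properfilterone}} \in \groupoidofultrafilters$ follows immediately. You instead give a self-contained maximality argument: from $\properfilterone \subseteq \properfiltertwo$ you deduce $\sourcesymbol{\properfilterone} \subseteq \sourcesymbol{\properfiltertwo}$, force equality by maximality of $\sourcesymbol{\properfilterone}$, and then apply \cref{lem:calc}\cref{item2:lemcalc}. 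Your route avoids the external citation and in effect reproves the needed special case of the ideal property inline; the paper's route is shorter but relies on a black box.
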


\begin{proof}
Fix $\ultrafilterone \in \groupoidofultrafilters$ and let $s \in \ultrafilterone$. Then $\pff(\ultrafilterone) = \germ{s}{\Efilterofproperfilter{\ultrafilterone}}$. By \cref{lem:pff_wd_rp}\cref{item:pff_notation}, we have $\Efilterofproperfilter{\ultrafilterone} \in \setofEultrafilters$, and hence $\pff(\ultrafilterone) \in \groupoidofultragerms$. For the reverse containment, fix $\germ{s}{\Efilterone} \in \groupoidofultragerms \subseteq \groupoidofpropergerms$. Since $\pff$ maps $\groupoidofproperfilters$ onto $\groupoidofpropergerms$, there is some $\ultrafilterone \in \groupoidofproperfilters$ such that $\pff(\ultrafilterone) = \germ{s}{\Efilterone}$. Fix $t \in \ultrafilterone$. Then $\germ{t}{\Efilterofproperfilter{\ultrafilterone}} = \pff(\ultrafilterone) = \germ{s}{\Efilterone}$. In particular, $\Efilterone = \Efilterofproperfilter{\ultrafilterone}$, and hence $\sourcesymbol{\ultrafilterone} \cap \E = \Efilterofproperfilter{\ultrafilterone} = \Efilterone \in \setofEultrafilters$. Therefore, \cref{prop:gopfunits_homeomorphic_to_setofefilters_patch} implies that $\sourcesymbol{\ultrafilterone} \in \units{\groupoidofultrafilters}$, and so $\ultrafilterone = \filtercompose{\ultrafilterone}{\sourcesymbol{\ultrafilterone}} \in \groupoidofultrafilters$, because $\setofultrafilters$ is an ideal in $\setofproperfilters$ by \cite[Proposition~2.41]{Bic21}. Thus, $\germ{s}{\Efilterone} = \pff(\ultrafilterone) \in \pff(\groupoidofultrafilters)$.
\end{proof}

\cref{cor:uff} is immediate from \cref{lem:uff_well_defined_lemma,thm:pff}, since the patch topologies on $\groupoidofultrafilters$ and $\groupoidofultragerms$ are the subspace topologies relative to the patch topologies
on $\groupoidofproperfilters$ and $\groupoidofpropergerms$, respectively.

\begin{corollary} \label{cor:uff}
The restriction $\pff|_{\groupoidofultrafilters}\colon \groupoidofultrafilters \to \groupoidofultragerms$ is a topological groupoid isomorphism with respect to the patch topologies.
\end{corollary}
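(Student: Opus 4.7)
The plan is to assemble the corollary from the two immediately preceding results with essentially no new work: Theorem \ref{thm:pff} gives that $\pff$ is a topological groupoid isomorphism $\groupoidofproperfilters \to \groupoidofpropergerms$, and Lemma \ref{lem:uff_well_defined_lemma} identifies the image of the subgroupoid $\groupoidofultrafilters$ under $\pff$ as exactly $\groupoidofultragerms$. So the restriction $\pff|_{\groupoidofultrafilters}$ is automatically a bijection onto $\groupoidofultragerms$.

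First I would note that the groupoid-isomorphism property is inherited by restriction: since $\groupoidofultrafilters$ is a subgroupoid of $\groupoidofproperfilters$ and $\groupoidofultragerms$ is a subgroupoid of $\groupoidofpropergerms$, and since $\pff$ preserves composability and composition by \cref{thm:pff}, the restriction preserves composability and composition between the subgroupoids; the inverse given in \cref{thm:pff} similarly restricts, using \cref{lem:uff_well_defined_lemma} to see the image lands in $\groupoidofultrafilters$.

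Next I would handle the topology. By definition, the patch topologies on $\groupoidofultrafilters$ and $\groupoidofultragerms$ are the subspace topologies inherited from $\groupoidofproperfilters$ and $\groupoidofpropergerms$ respectively. Since the restriction of a homeomorphism to a subspace is a homeomorphism onto its image (with the subspace topology), and \cref{lem:uff_well_defined_lemma} identifies that image as $\groupoidofultragerms$, the restricted map is a homeomorphism.

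There is no real obstacle here — everything is packaged so that the corollary is a one-line consequence. The only point that requires a moment's care is making explicit that both topologies in question really are subspace topologies, which the statement already flags, so I would simply cite \cref{thm:pff,lem:uff_well_defined_lemma} and conclude.
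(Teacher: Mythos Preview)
Your proposal is correct and matches the paper's own argument: the paper simply states that \cref{cor:uff} is immediate from \cref{thm:pff,lem:uff_well_defined_lemma}, together with the observation that the patch topologies on $\groupoidofultrafilters$ and $\groupoidofultragerms$ are the subspace topologies inherited from $\groupoidofproperfilters$ and $\groupoidofpropergerms$. There is nothing to add.
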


For all $s \in S$, define $\ultrafiltersIB{s} \coloneqq \properfiltersIB{s}\cap\setofultrafilters$. We generalise \cite[Proposition~5.18]{LL13}.

\begin{proposition} \label{prop:ultrafilters_inclusive}
The collection $\ultrafiltersIBcollection$ is a basis for the patch topology on $\groupoidofultrafilters$.
\end{proposition}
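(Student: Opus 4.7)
My plan is to show that the subspace patch topology on $\setofultrafilters$, with basis $(\properfiltersPB{s}{T} \cap \setofultrafilters)_{s \in S,\, T \finsubseteq \down{s}}$, is also generated by $\ultrafiltersIBcollection$. One direction is immediate since $\ultrafiltersIB{s} = \properfiltersPB{s}{\varnothing} \cap \setofultrafilters$. For the other, I will fix $U \in \properfiltersPB{s}{T} \cap \setofultrafilters$ and produce a single element $u \in S$ with $U \in \ultrafiltersIB{u} \subseteq \properfiltersPB{s}{T} \cap \setofultrafilters$. Pinning down this $u$ is the main obstacle: the analogous unit-space argument in \cref{lem:basis_for_idempotent_proper_filters} used a whole finite set of idempotents $\{e\sinv{t}t : t \in T\}$ to exclude the elements of $T$, and the ultrafilter hypothesis is what will let us collapse this to a single element.

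The candidate is $u = sf$, where $f$ is built inside the associated $\E$-ultrafilter $\xi \coloneqq \Efilterofproperfilter{U}$ (an $\E$-ultrafilter by \cref{lem:pff_wd_rp}\cref{item:pff_notation}). First I would verify that $\sinv{t}t \notin \xi$ for each $t \in T$: the identity $s\sinv{t}t = t$ (a consequence of $t \npo s$) combined with $s \in U$ and $\xi \subseteq \sourcesymbol{U}$ would otherwise force $t \in \upbrackets{s\sourcesymbol{U}} = U$ via \cref{lem:calc}\cref{item1:lemcalc}, contradicting $U \cap T = \varnothing$. The standard fact that an $\E$-ultrafilter annihilates every idempotent lying outside it then supplies $f_t \in \xi$ with $f_t\sinv{t}t = 0$ for each $t \in T$, and I put
\[
f \coloneqq (\sinv{s}s) \prod_{t \in T} f_t.
\]
Since $\sinv{s}s \in \xi$ (because $s \in U$) and $\xi$ is closed under finite meets, $f \in \xi$, $f \npo \sinv{s}s$, and $f\sinv{t}t = 0$ for every $t \in T$.

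The two remaining containments then reduce to short order-theoretic manipulations. For $U \in \ultrafiltersIB{sf}$: since $s \in U$ and $f \in \xi \subseteq \sourcesymbol{U}$, we have $sf \in \upbrackets{s\sourcesymbol{U}} = U$. For $\ultrafiltersIB{sf} \subseteq \properfiltersPB{s}{T}$: any ultrafilter $V$ with $sf \in V$ has $s \in V$ (since $sf \npo s$) and $f = \sinv{(sf)}sf \in \xi_V \coloneqq \Efilterofproperfilter{V}$ (since $f \npo \sinv{s}s$); if some $t \in T$ also lay in $V$, then $\sinv{t}t \in \xi_V$ would force $0 = f\sinv{t}t \in \xi_V$, contradicting the properness of $\xi_V$.
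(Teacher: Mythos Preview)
Your proof is correct and takes a genuinely different route from the paper's. The paper transports the question across the isomorphism $\pff|_{\groupoidofultrafilters}$ of \cref{cor:uff} to the germ side, reducing to the claim that $(\ultragermsbasic{s}{\EultrafiltersIB{\sinv{s}s}})_{s\in S}$ is a basis for $\groupoidofultragerms$; there it invokes \cite[Lemma~2.26]{Law12} (every open set of $\E$-ultrafilters is a union of sets $\EultrafiltersIB{e}$) and then takes $t = se$. You instead work entirely on the filter side, using the equivalent characterisation of $\E$-ultrafilters---that every idempotent outside $\xi$ is annihilated by some element of $\xi$---to manufacture a single $f \in \xi$ with $f\sinv{t}t = 0$ for all $t \in T$, and then set $u = sf$. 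Both arguments ultimately rest on the same fact about the topology of $\setofEultrafilters$, just phrased differently. Your approach has the virtue of being self-contained (it does not need \cref{thm:pff} or \cref{cor:uff}), while the paper's approach illustrates how $\pff$ converts filter questions into germ questions where the unit-space topology is better understood.
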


\begin{proof}
By \cref{cor:uff}, it suffices to show that $(\pff(\ultrafiltersIB{s}))_{s \in S}$ is a basis for the topology on $\groupoidofultragerms$. For each $s \in S$ and open set $A \subseteq \EultrafiltersIB{\sinv{s}s}$, define $\ultragermsbasic{s}{A} \coloneqq \{ \germ{s}{\Efilterone} : \Efilterone \in A \}$. The collection of all such sets $\ultragermsbasic{s}{A}$ is a basis for the topology on $\groupoidofultragerms$. Observe that
\[
\pff(\ultrafiltersIB{s}) = \{\germ{s}{\Efilterofproperfilter{\ultrafilterone}} : \ultrafilterone \in \ultrafiltersIB{s} \} = \{\germ{s}{\Efilterone} : \Efilterone \in \EultrafiltersIB{\sinv{s}s}\} = \ultragermsbasic{s}{\EultrafiltersIB{\sinv{s}s}},
\]
and so $(\pff(\ultrafiltersIB{s}))_{s \in S} = (\ultragermsbasic{s}{\EultrafiltersIB{\sinv{s}s}})_{s \in S}$. Thus it suffices to show that $(\ultragermsbasic{s}{\EultrafiltersIB{\sinv{s}s}})_{s \in S}$ is a basis for the topology on $\groupoidofultragerms$. Fix $s \in S$ and take any open set $A \subseteq \EultrafiltersIB{\sinv{s}s}$. Let $\germ{s}{\Efilterone} \in \ultragermsbasic{s}{A}$. Since the collection of sets of the form $\ultragermsbasic{s}{A}$ is a basis for the topology on $\groupoidofultragerms$, it suffices to find $t \in S$ such that $\germ{s}{\Efilterone} \in \ultragermsbasic{t}{\EultrafiltersIB{\sinv{t}t}} \subseteq \ultragermsbasic{s}{A}$. It follows from \cite[Lemma~2.26]{Law12} (which applies to the collection of idempotents in any inverse semigroup) that $A = \bigcup_{e \in X} \EultrafiltersIB{e}$ for some $X \subseteq \E$. Thus, there is some $e \in X$ such that $\Efilterone \in \EultrafiltersIB{e}$, and so $\germ{s}{\Efilterone} \in \ultragermsbasic{s}{\EultrafiltersIB{e}} \subseteq \ultragermsbasic{s}{A}$. By putting $t \coloneqq se$ and observing that $\sinv{t}t =~ \sinv{s}se \in \Efilterone$ and $\sinv{t}t \npo e$, we see that
\[
\germ{s}{\Efilterone} \in \ultragermsbasic{t}{\EultrafiltersIB{\sinv{t}t}} \subseteq \ultragermsbasic{s}{\EultrafiltersIB{e}} \subseteq \ultragermsbasic{s}{A}. \qedhere
\]
\end{proof}

\begin{remark}
Knowing that $\ultrafiltersIBcollection$ is a basis for the patch topology on $\groupoidofultrafilters$ leads to the following characterisation of convergence of nets in $\groupoidofultrafilters$: for any net $\indexbyin{\ultrafilterone}{k}{K} \subseteq \groupoidofultrafilters$ and any $\ultrafilterone \in \groupoidofultrafilters$, $\indexbyin{\ultrafilterone}{k}{K}$ converges to $\ultrafilterone$ if and only if, for each $u \in \ultrafilterone$, there exists $k_u \in K$ such that $u \in \ultrafilterone_k$ for all $k ~\succeq~ k_u$.
\end{remark}

\begin{remark}
Define $\groupoidoftightfilters \coloneqq \inv{\pff}(\groupoidoftightgerms)$, where $\groupoidoftightgerms$ is Exel's tight groupoid given in \cite[Theorem~13.3]{Exe08}. Since $\pff\colon \groupoidofproperfilters \to \groupoidofpropergerms$ is a topological groupoid isomorphism and $\groupoidoftightgerms$ is a subgroupoid of $\groupoidofpropergerms$, $\groupoidoftightfilters$ is a topological subgroupoid of $\groupoidofproperfilters$, which is topologically isomorphic to $\groupoidoftightgerms$. By \cite[Lemma~5.9]{LL13}, $\groupoidoftightfilters$ is the groupoid denoted by $G_t(S)$ in \cite{LL13}. Furthermore, $\groupoidoftightfilters$ is the reduction of $\groupoidoffilters$ with respect to a certain coverage notion referred to in \cite[Corollary~6.8]{Cas20}.
\end{remark}

\begin{remark}
Let $\setofEfilters$ be the set of filters in the inverse semigroup $\E$, which we can identify with the \emph{spectrum} of $\E$ defined in \cite[Definition~10.1]{Exe08}. There is a groupoid $\groupoidofgerms$ of germs associated to $(S,\setofEfilters)$, as $\groupoidofpropergerms$ is to $(S,\setofEproperfilters)$. The groupoid $\groupoidofgerms$ of germs can be identified with Paterson's universal groupoid $\Patersonsgroupoid$ \cite[Definition~5.14]{Ste10}, and $\pff$ extends to a topological isomorphism $\lff$ from $\groupoidoffilters$ to $\groupoidofgerms$ \cite[p5]{Cas20}. Moreover, the map $s \mapsto \up{s}$ from $S$ to $\setoffilters$ is a faithful homomorphism of inverse semigroups. Therefore, the diagram from \cref{diag:roadmap} extends as follows:

\begin{center}
\begin{tikzcd}[column sep=tiny]
S\arrow[rr, hook, "\up{}"] & & \groupoidoffilters\arrow[d, hook, two heads, "\lambda"] & \ge & \groupoidofproperfilters \arrow[d, hook, two heads, "\pff"] & \ge & \groupoidoftightfilters \arrow[d, hook, two heads, "\pff|_{\groupoidoftightfilters}"] & \ge & \groupoidofultrafilters \arrow[d, hook, two heads, "\pff|_{\groupoidofultrafilters}"] \\
\Patersonsgroupoid & \cong & \groupoidofgerms & \ge & \groupoidofpropergerms & \ge & \groupoidoftightgerms & \ge & \groupoidofultragerms
\end{tikzcd}
\end{center}
\end{remark}

\begin{remark}
The set $\E$ is a \emph{semilattice} in the sense that $ef$ is the greatest lower bound of $\{e,f\}$, for all $e,f \in \E$. The semilattice $E$ is \emph{compactable} in the sense of \cite{Law10cs} if and only if $\setoftightEfilters = \setofEultrafilters$, by \cite[Theorem~2.5]{Law10cs}. When $\setoftightEfilters = \setofEultrafilters$, $\groupoidoftightgerms$ coincides with $\groupoidofultragerms$. Therefore, $\E$ is compactable if and only if the map $\pff|_{\groupoidofultrafilters}$ is a topological groupoid isomorphism from the groupoid $\groupoidofultrafilters$ of ultrafilters to Exel's tight groupoid $\groupoidoftightgerms$.
\end{remark}

\vspace{2ex}

\end{document}